\declaretheorem[name=Theorem,numberwithin=section]{thm}
\declaretheorem[name=Remark,style=remark,sibling=thm]{rem}
\declaretheorem[name=Lemma,sibling=thm]{lemma}
\declaretheorem[name=Definition,style=definition,sibling=thm]{defn}
\declaretheorem[name=Corollary,sibling=thm]{cor}
\declaretheorem[name=Assumption,style=definition,sibling=thm]{assum}
\newtheorem{ot}{Theorem}
\numberwithin{equation}{section}
\crefname{lemma}{Lemma}{Lemmata}
\crefname{prop}{Proposition}{Propositions}
\crefname{thm}{Theorem}{Theorems}
\crefname{cor}{Corollary}{Corollaries}
\crefname{defn}{Definition}{Definitions}
\crefname{example}{Example}{Examples}
\crefname{rem}{Remark}{Remarks}
\crefname{assum}{Assumption}{Assumptions}
\crefname{nota}{Notation}{Notation}
\newcommand{\ti}{\tilde}
\newcommand{\cn}{\colon}
\newcommand{\sub}{\subset}
\newcommand{\bbR}{\mathbb{R}}
\newcommand{\bbS}{\mathbb{S}}
\newcommand{\bbH}{\mathbb{H}}
\newcommand{\8}{\infty}
\newcommand{\al}{\alpha}
\newcommand{\be}{\beta}
\newcommand{\ga}{\gamma}
\newcommand{\de}{\delta}
\newcommand{\ep}{\varepsilon}
\newcommand{\ka}{\kappa}
\newcommand{\la}{\lambda}
\newcommand{\om}{\omega}
\newcommand{\si}{\sigma}
\newcommand{\Si}{\Sigma}
\newcommand{\Om}{\Omega}
\newcommand{\Ga}{\Gamma}
\newcommand{\La}{\Lambda}
\newcommand{\cU}{\mathcal{U}}
\newcommand{\del}{\partial}
\newcommand{\n}{\nabla}
\newcommand{\fa}{\forall}
\newcommand{\fr}[2]{\frac{#1}{#2}}
\newcommand{\x}{\times}
\DeclareMathOperator{\linspan}{span}
\DeclareMathOperator{\id}{id}
\DeclareMathOperator{\tr}{tr}
\DeclareMathOperator{\Rm}{Rm}
\newcommand{\pf}[1]{\begin{proof}#1 \end{proof}}
\newcommand{\eq}[1]{\begin{equation}\begin{alignedat}{2} #1 \end{alignedat}\end{equation}}
\newcommand{\br}[1]{\left(#1\right)}
\newcommand{\abs}[1]{\lvert #1\rvert}
\newcommand{\enum}[1]{\begin{enumerate}[(i)] #1 \end{enumerate}}
\newcommand{\ra}{\rightarrow}
\newcommand{\hra}{\hookrightarrow}
\newcommand{\mt}{\mapsto}
\newcommand{\mrm}{\mathrm}
\newcommand{\hp}{\hphantom}
\newcommand{\q}{\quad}
\newcommand{\Dm}{\ensuremath{D_m}}
\begin{document}
	
	\title[A Viscosity Approach to CRT]{Constant Rank Theorems for Curvature Problems via a Viscosity approach}
	\author[P. Bryan, M. N. Ivaki, J. Scheuer]{Paul Bryan, Mohammad N. Ivaki, Julian Scheuer}
	\dedicatory{}
	\date{\today}
	\subjclass[2020]{}
	\keywords{Curvature problems; Constant rank theorems}
	\begin{abstract}
		An important set of theorems in geometric analysis consists of constant rank theorems for a wide variety of curvature problems. In this paper, for geometric curvature problems in compact and non-compact settings, we provide new proofs which are both elementary and short. Moreover, we employ our method to obtain constant rank theorems for homogeneous and non-homogeneous curvature equations in new geometric settings. One of the essential ingredients for our method is a generalization of a differential inequality in a viscosity sense satisfied by the smallest eigenvalue of a linear map (Brendle-Choi-Daskalopoulos, Acta Math. 219(2017): 1--16) to the one for the subtrace. The viscosity approach provides a concise way to work around the well known technical hurdle that eigenvalues are only Lipschitz in general. This paves the way for a simple induction argument.
	\end{abstract}
	\maketitle
	
	\section{Introduction}
	We introduce a viscosity approach to a broad class of constant rank theorems. Such theorems say that under suitable conditions a positive semi-definite bilinear form on a manifold, that satisfies a uniformly elliptic PDE, must have constant rank in the manifold. In this sense, constant rank theorems can be viewed as a strong maximum principle for tensors. The aim of this paper is two-fold. Firstly, we want to present a new approach to constant rank theorems. It is based on the idea that the subtraces of a linear map satisfy a linear differential inequality in a viscosity sense and the latter allows to use the strong maximum principle. This avoids the use of nonlinear test functions, as in \cite{Bian2009}, as well as the need for approximation by simple eigenvalues, as in \cite{Szekelyhidi2016}. Secondly, we show that the simplicity of this method allows us to obtain previously undiscovered constant rank theorems, in particular for non-homogeneous curvature type equations.
	To illustrate the idea, we give a new proof for the following full rank theorem for the Christoffel-Minkowski problem, a.k.a. the $\sigma_{k}$-equation.
	\begin{thm}\cite[Thm.~1.2]{Guan2003a}
		Let $(\mathbb{S}^n,g,\nabla)$ be the unit sphere with standard round metric and connection.
		Suppose $n\geq 2$, $1\leq k\leq n-1$ and $0<s,\phi\in C^{\infty}(\mathbb{S}^n)$ satisfy
		\eq{\n^{2}\phi^{-\fr 1k}+\phi^{-\fr 1k}g\geq 0,\; 0\leq r:=\n^{2}s+sg\in \Ga_{k},\; \sigma_{k}(r)= \phi,}
		where $\si_{k}$ is $k$-th symmetric polynomial of eigenvalues of $r$ with respect to $g$ and $\Ga_{k}$ is the $k$-th Garding cone. Then $r$ is positive definite.
	\end{thm}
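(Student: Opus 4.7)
The plan is to argue by contradiction via the viscosity strong maximum principle applied to the subtrace of $r$. Suppose $r$ fails to be positive definite, and let $\ell := \min_{\mathbb{S}^n}\mathrm{rank}(r) < n$. Since $r \in \Gamma_k$ forces $\sigma_j(r) > 0$ for $j \le k$, one has $\ell \ge k$; set $m := n - \ell \ge 1$ and denote by $S_m(r) := \lambda_1(r) + \cdots + \lambda_m(r)$ the sum of the $m$ smallest eigenvalues of $r$. Then $S_m(r) \ge 0$, and it vanishes exactly on the minimum-rank locus, in particular at some interior point $x_0$. The goal is to show that $S_m(r)$ is a viscosity subsolution of a linear elliptic inequality $F^{ij}\nabla_i\nabla_j S_m(r) \le C\, S_m(r)$, so that the strong maximum principle forces $S_m(r) \equiv 0$ and hence $\mathrm{rank}(r) \equiv \ell$ globally.

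To extract the viscosity inequality, I would differentiate the equation $\sigma_k(r) = \phi$ twice. Writing $F^{ij} := \partial \sigma_k/\partial r_{ij}$ (positive definite on $\Gamma_k$) and commuting covariant derivatives via the Ricci identity on $\mathbb{S}^n$ together with the defining relation $r = \nabla^2 s + sg$, one obtains a tensorial identity of the schematic form
\[
F^{ij}\nabla_i\nabla_j r_{ab} + F^{ij,kl}\nabla_a r_{ij}\,\nabla_b r_{kl} = \nabla_a\nabla_b\phi + \text{(lower-order tensorial terms in $r$, $s$, $g$)}.
\]
Contracting against the projection onto the eigenspace of the $m$ smallest eigenvalues of $r$, and testing against smooth lower barriers -- exactly what the viscosity formulation is designed to allow, bypassing the fact that $S_m$ is only Lipschitz at eigenvalue collisions -- should produce the desired viscosity inequality. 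Two ingredients are indispensable: (i) concavity of $\sigma_k^{1/k}$, via the Maclaurin/Newton inequalities, absorbs the a priori indefinite second-order gradient term $F^{ij,kl}\nabla_a r_{ij}\,\nabla_b r_{kl}$; and (ii) the hypothesis $\nabla^2\phi^{-1/k} + \phi^{-1/k} g \ge 0$, once the right-hand side above is rewritten in terms of $\phi^{-1/k}$, contributes a non-negative term precisely along the kernel directions of $r$.

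With the viscosity inequality in place, the strong maximum principle for the uniformly elliptic operator $F^{ij}\nabla_i\nabla_j$ forces $S_m(r) \equiv 0$ on $\mathbb{S}^n$, so $\mathrm{rank}(r) \equiv \ell < n$. To upgrade this constant-rank conclusion to $r > 0$, I would exploit the geometry of Christoffel--Minkowski: constant rank yields a smooth $m$-dimensional kernel distribution for $r = \nabla^2 s + sg$, along which any kernel vector $V$ satisfies $(\nabla^2 s)(V,W) = -s\,g(V,W)$ for all $W$; combining this rigidity with $\sigma_k(r) = \phi > 0$ and the integrability/global geometric constraints on $\mathbb{S}^n$ contradicts the existence of such a distribution.

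The main obstacle is unquestionably the derivation of the viscosity differential inequality for $S_m(r)$. Eigenvalues are only Lipschitz at collisions, so classical differentiation fails; the quadratic term $F^{ij,kl}\nabla_a r_{ij}\,\nabla_b r_{kl}$ is not sign-definite; and the interplay between concavity of $\sigma_k^{1/k}$ and the hypothesis on $\phi$ must be orchestrated to cover the whole $m$-dimensional kernel of $r$, not merely the single smallest eigendirection as in Brendle--Choi--Daskalopoulos. Packaging this generalization in viscosity form is the novel technical content flagged in the introduction; once it is available, the strong maximum principle closes the argument in one stroke, without the nonlinear test functions or simple-eigenvalue approximations used in earlier approaches.
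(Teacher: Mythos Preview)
Your outline has the right spirit but diverges from the paper's proof in ways that create gaps.

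First, the paper does \emph{not} use the subtrace here. Since this is a \emph{full} rank theorem (there is a point where $r>0$), it suffices to work with $\lambda_1$ alone: the viscosity inequality from \cite[Lem.~5]{BrendleChoiDaskalopoulos:/2017} yields $F^{ij}\xi_{;ij}\le F^{ij}g_{ij}\xi + c|\nabla\xi|$ for any lower support $\xi$ of $\lambda_1$, and the strong maximum principle shows $\{\lambda_1=0\}$ is open. Your subtrace route is the machinery for the general constant rank theorem (Theorem~\ref{CRT}), and it requires the inductive hypothesis $\lambda_1\equiv\cdots\equiv\lambda_{m-1}\equiv 0$ \emph{globally} before the viscosity inequality for $S_m$ is available (see Theorem~\ref{viscosity}). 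You only know this at the minimum-rank point, so jumping directly to $S_m$ with $m=n-\ell$ is a genuine gap; you would need to run the induction $m=1,2,\dots$ first.

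Second, your final ``upgrade to full rank'' via integrability or global constraints is both vague and unnecessary. The paper's argument is one line: at a minimum of $s$ we have $\nabla^2 s\ge 0$, hence $r=\nabla^2 s + sg\ge sg>0$, so $\lambda_1>0$ somewhere; combined with $\{\lambda_1=0\}$ open, this forces $\lambda_1>0$ everywhere.

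Third, the key structural input is \emph{inverse} concavity of $\sigma_k^{1/k}$, not concavity. Concavity gives $-F^{ij,kl}\eta_{ij}\eta_{kl}\ge 0$, which is the wrong sign for an upper bound on $F^{ij}\xi_{;ij}$. Inverse concavity produces the extra term $-2f^{-1}(f_{;1})^2$, which combines with $f_{;11}-f$ to give $-f^2\bigl((f^{-1})_{;11}+f^{-1}\bigr)\le 0$ by the hypothesis on $\phi^{-1/k}$. Without this you cannot close the inequality.
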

	\pf{
		For convenience, we define \eq{F=\sigma_{k}^{1/k},\; f=\phi^{1/k}.} Then $F=f.$
		Differentiate $F$ and use Codazzi, where a semi-colon stands for covariant derivatives and we use the summation convention:
		\eq{f_{;ab}=F_{;ab}&=F^{ij,kl}r_{ij;a}r_{kl;b}+F^{ij}r_{ij;ab}\\
			&=F^{ij,kl}r_{ij;a}r_{kl;b}+F^{ij}r_{ab;ij}-r_{ab}F^{ij}g_{ij}+Fg_{ab}.}
		Hence the tensor $r$ satisfies the elliptic equation
		\eq{\label{ell}F^{ij}r_{ab;ij}=F^{ij}g_{ij}r_{ab}-fg_{ab}-F^{ij,kl}r_{ij;a}r_{kl;b}+f_{;ab}.}
		Now we deduce an inequality for the lowest eigenvalue of $r$, $ \la_{1}$, in a viscosity sense. Let $\xi$ be a smooth lower support at $x_0\in \bbS^{n}$ for $\la_{1}$ and let $D_1\geq 1$ denote the multiplicity of $\la_{1}(x_0)$. Denote by $\La$ the complement of the set $\{i,j,k,l>D_1\}$ in $\{1,\dots,n\}^{4}$. We use a relation between the derivatives of $\xi$ and $r$, and the inverse concavity of $F$ (cf. \cite[Lem.~5]{BrendleChoiDaskalopoulos:/2017}, \cite{Andrews:/2007}) to estimate in normal coordinates at $x_0$:
		\eq{F^{ij}\xi_{;ij}&\leq F^{ij} r_{11;ij}-2\sum_{j>D_1}\fr{F^{ii}}{\la_{j}}(r_{ij;1})^{2}\\
			&=-F^{ij,kl}r_{ij;1}r_{kl;1}-2\sum_{j>D_1}\fr{F^{ii}}{\la_{j}}(r_{ij;1})^{2}+F^{ij}g_{ij}r_{11}-(f-f_{;11})\\
			&=-\sum_{i,j,k,l>D_1}F^{ij,kl}r_{ij;1}r_{kl;1}-2\sum_{j>D_1}\fr{F^{ii}}{\la_{j}}(r_{ij;1})^{2}-(f-f_{;11})\\
			&\hp{=}~-\sum_{(i,j,k,l)\in \La}F^{ij,kl}r_{ij;1}r_{kl;1}+F^{ij}g_{ij}r_{11}\\
			&\leq-\br{f+2f^{-1}f_{;1}^2-f_{;11}} + c\abs{\n\xi}+F^{ij}g_{ij}\xi\\
			&\leq F^{ij}g_{ij}\xi+ c\abs{\n\xi}.}
		Then the strong maximum principle for viscosity solutions (cf. \cite{Bardi1999}) implies that the set $\{\la_{1}=0\}$ is open. Hence, if $\la_{1}$ was zero somewhere, it would be zero everywhere. However, we know it is positive somewhere, since at a minimum of $s$ we have $r>0$.
	}
	
	The proof may be summarized as follows: apply the viscosity differential inequality from \cite[Lem.~5]{BrendleChoiDaskalopoulos:/2017} for the minimum eigenvalue \(\lambda_1\) of the spherical hessian of $r$. Then the strong maximum principle shows that since there is a point at which \(\lambda_1 > 0\) we must have \(\lambda_1 > 0\) everywhere and hence the hessian has constant, full rank. A similar argument was employed in \cite{Ivaki2019} for obtaining curvature estimates along a curvature flow. 
	
	Our main approach here is to generalize the viscosity inequality to the subtrace $G_m = \lambda_1 + \dots + \lambda_m$, the sum of the first $m$ eigenvalues. See \Cref{BCD-extension lemma} below. Then by induction, we are able to show that if $\lambda_1 = \dots = \lambda_{m-1} \equiv 0$, the strong maximum principle shows that either $G_m > 0$ or $G_m \equiv 0$ to conclude constant rank theorems (in short, CRT). 
	
	We say a symmetric 2-tensor $\alpha$ is Codazzi, provided $\nabla \alpha$ is totally symmetric.	Here is a prototypical CRT:
	
	\begin{thm}[Homogeneous CRT]\label{Codazzi version}\cite[Thm.~1.4]{Caffarelli2007}
		Suppose $\alpha$ is a Codazzi, non-negative, symmetric $2$-tensor on a connected Riemannian manifold $(M,g,\nabla)$ satisfying $\Psi(\alpha,g) = f> 0$, where $\Psi$ is one-homogeneous, inverse concave  and strictly elliptic (see \Cref{Def:F} and \Cref{assumption p-CM}), and we have $\nabla^2 f^{-1} + \tau f^{-1} g \geq 0$ with $\tau(x)$ the minimum sectional curvature at $x$. Then $\alpha$ is of constant rank.\footnote{Note that in \cite[Thm.~1.4]{Caffarelli2007}, $F:=-\Psi^{-1}$.}
	\end{thm}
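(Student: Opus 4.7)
The plan is to emulate the argument given above for the $\si_k$-equation, replacing the smallest eigenvalue $\la_1$ by the subtrace $G_m=\la_1+\dots+\la_m$ and inducting on $m$. First, I would derive a linear elliptic equation for $\alpha$ in analogy with~\eqref{ell}. Since $\Psi$ is one-homogeneous, Euler's identity gives $F^{ij}\alpha_{ij}=f$ where $F^{ij}:=\partial\Psi/\partial\alpha_{ij}$. Differentiating $\Psi(\alpha,g)=f$ twice, using the Codazzi property to transport derivatives onto $\alpha$, and commuting the final pair of covariant derivatives via the Ricci identity, one obtains
\[
F^{ij}\alpha_{ab;ij}=-F^{ij,kl}\alpha_{ij;a}\alpha_{kl;b}+f_{;ab}+\cR_{ab}(\alpha),
\]
where $\cR_{ab}(\alpha)$ is linear in $\alpha$ with coefficients bounded below by the minimum sectional curvature $\tau(x)$; this is the place at which the curvature hypothesis enters.

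Second, I would invoke \Cref{BCD-extension lemma} for the subtrace, applied to the tensor $\alpha$. Given a smooth lower support $\xi$ of $G_m$ at a point $x_0$, in normal coordinates diagonalising $\alpha$ at $x_0$ with ordered eigenvalues $\la_1\leq\dots\leq\la_n$, the lemma produces an inequality of the form
\[
F^{ij}\xi_{;ij}\leq \sum_{a\leq m}F^{ij}\alpha_{aa;ij}+\mathcal{Q}+c\abs{\n\xi},
\]
where $\mathcal{Q}\leq 0$ is the subtrace analogue of the gradient correction $-2\sum_{j>D_1}(F^{ii}/\la_j)(r_{ij;1})^2$ appearing in the $\si_k$-proof. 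Inserting the elliptic equation from the first step, the inverse concavity of $\Psi$ absorbs the second-variation term $-\sum_{a\leq m}F^{ij,kl}\alpha_{ij;a}\alpha_{kl;a}$ against $\mathcal{Q}$ exactly as in the $\si_k$ computation, and the remaining contribution $\sum_{a\leq m}f_{;aa}$ is rearranged via the identity $f_{;aa}-2f^{-1}(f_{;a})^2=-f^2(f^{-1})_{;aa}$. Combining this with the hypothesis $\n^2 f^{-1}+\tau f^{-1}g\geq 0$ and the lower bound on $\cR_{ab}$ produces a clean viscosity inequality at $x_0$ of the form $F^{ij}\xi_{;ij}\leq c_1\abs{\n\xi}+c_2\xi$.

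Finally, let $m_0:=\min_M\operatorname{rank}(\alpha)$, attained at some $x_0\in M$, so that $\la_1(x_0)=\dots=\la_{n-m_0}(x_0)=0$ and hence $G_m(x_0)=0$ for $1\leq m\leq n-m_0$. I would show by induction on $m$ that $G_m\equiv 0$ on $M$. The base case $m=1$ is a verbatim repetition of the $\si_k$-argument: the strong maximum principle for viscosity supersolutions~\cite{Bardi1999} applied on the connected manifold $M$ forces $G_1>0$ or $G_1\equiv 0$, and the value at $x_0$ selects the latter. For $m>1$, the inductive hypothesis $G_{m-1}\equiv 0$ gives $\la_1=\dots=\la_{m-1}\equiv 0$, so $G_m=\la_m\geq 0$; the strong maximum principle applied to the inequality from the second step again forces $G_m\equiv 0$. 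After $n-m_0$ iterations, $\la_1=\dots=\la_{n-m_0}\equiv 0$, which is the desired constant rank conclusion.

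The main obstacle I anticipate is the second step, specifically the non-smoothness of $G_m$ when the eigenvalue $\la_m$ has nontrivial multiplicity at $x_0$: since $G_m$ is only Lipschitz in general, one cannot simply choose $\xi$ to be an eigenvalue of $\alpha$. This is precisely the technical point that \Cref{BCD-extension lemma} is designed to handle, by constructing smooth lower supports of $G_m$ from traces over invariant $m$-planes, thereby bypassing both the approximation by simple eigenvalues of~\cite{Szekelyhidi2016} and the nonlinear test function approach of~\cite{Bian2009}.
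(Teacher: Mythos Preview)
Your overall architecture---derive an elliptic equation for $\alpha$, apply \Cref{BCD-extension lemma} to obtain a viscosity inequality for $G_m$, then induct using the strong maximum principle---is exactly the route the paper takes (indeed the paper packages this as \Cref{CRT} and then verifies its hypothesis $\om_F\geq -c\al_{mm}$ in a few lines). However, your second step has a genuine gap.

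The claim that inverse concavity absorbs $-\sum_{a\leq m}F^{ij,kl}\alpha_{ij;a}\alpha_{kl;a}$ against $\mathcal Q$ ``exactly as in the $\si_k$ computation'' does not survive the passage from $\la_1$ to $G_m$. In the $\la_1$ proof, every term $\al_{ij;1}$ with some index $\leq D_1$ can, via Codazzi and \Cref{Brendle's lemma}(1), be rewritten as $\al_{pq;r}$ with $p,q\leq D_1$ and hence is $O(|\n\xi|)$. For $G_m$ with $m>1$ at a point where $\la_m(x_0)>0$, the block $\{1,\dots,D_m\}$ splits into the kernel $\{1,\dots,m-1\}$ and the eigenspace $\{m,\dots,D_m\}$; \Cref{nabla-alpha} controls $\al_{pq;r}$ only when $p,q$ lie in the \emph{same} block, so the cross terms $\al_{pq;r}$ with $p\leq m-1<q\leq D_m$ remain. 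Meanwhile your $\mathcal Q$ from \Cref{BCD-extension lemma} only produces good terms with $r>D_m$, so it cannot see these cross terms at all, and inverse concavity (which requires a positive $\beta$, hence an $\ep$-regularization $\al_\ep=\al+\ep\id$ you also need to insert) does not close the estimate by itself.

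The missing idea, which the paper credits to \cite{Szekelyhidi2016}, is to exploit the inductive hypothesis $G_1\equiv\dots\equiv G_{m-1}\equiv 0$ a second time: apply \Cref{BCD-extension lemma} to each $G_q$, $1\leq q\leq m-1$, with the zero lower support, obtaining the free inequalities
\[
0\leq \sum_{k=1}^{q}\al_{kk;ii}-2\sum_{k=1}^{q}\sum_{j>D_{q}}\fr{(\al_{kj;i})^{2}}{\la_{j}-\la_{k}},
\]
and \emph{add} all of them to the inequality for $\xi$. Since $D_q=m-1$ for $q\leq m-1$, the extra good terms now range over $j\in\{m,\dots,D_m\}$, precisely the window needed to absorb the bad cross terms by Cauchy--Schwarz (using uniform ellipticity and $\la_j-\la_k\leq\la_m\leq\xi$). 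This is the content of the remainder estimate $\mathcal R\leq c\xi$ in the paper's \Cref{viscosity}; without it your viscosity inequality does not follow.
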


	We state a more general version of CRT that allows the curvature function to be non-homogeneous and to explicitly depend on $x\in M$ as well. To state the result, we need a few definitions.

	\begin{defn}\label{Def:F}
		Let $\Ga\sub\bbR^{n}$ be an open, convex cone such that 
		\eq{\Ga_{+} := \{\la\in \bbR^{n}\cn \la_{i}>0\q\fa 1\leq i\leq n\} \sub\Ga.}
		Suppose $(M^n,g)$ is a smooth Riemannian manifold. A $C^{\8}$-function
		\eq{F\cn \Ga\x M\ra \bbR}
		is said to be a  {\it{pointwise curvature function}}, if for any $x\in M$, the map $F(\cdot,x)$ is symmetric under permutation of the $\la_{i}$. Such a map generates another  map (denoted by $F$ again) given by
		\eq{F\cn \cU\sub \bbR^{n\x n}_{\mathrm{sym}}\x \bbR^{n\x n}_{\mrm{sym}}\x M&\ra \bbR\\									(\al,g,x)&\mt F(\al,g,x) = F(\la,x),}
		where $\cU$ is a suitable open set and $\la = (\la_{i})_{1\leq i\leq n}$ are the eigenvalues of $\al$ with respect to $g$, or equivalently, the eigenvalues of the linear map $\al^{\sharp}$ defined by
		$g(\al^{\sharp}(v),w) = \al(v,w).$ Note that  $F$ can be considered as a map on an open set of $\bbR^{n\x n}$ via $F(\al^{\sharp},x) = F(\al,g,x)$;
		see \cite{Scheuer:06/2018}. 
		
		With the convention $\al^{i}_{j} = g^{ik}\al_{kj}$, where $(g^{kl})$ is the inverse of $(g_{kl})$:
		\eq{F^{i}_{j} := \fr{\del F}{\del \al^{j}_{i}},\q F^{ij} := \fr{\del F}{\del \al_{ij}},\q F^{ij,kl} := \fr{\del F}{\del \al_{ij}\del \al_{kl}}.}
		Note that $F^{ij} = F^{i}_{k}g^{kj}$. Moreover,  $F$ is said to be 
		\enum{
			\item {\it{strictly elliptic}}, if 
			$F^{ij}\eta_{i}\eta_{j} >0\q\fa 0\leq \eta\in 
			\bbR^{n},$
			\item {\it{one-homogeneous}}, if for all $x\in M$, $F(\cdot,x)$ is homogeneous of degree one, and
			\item {\it{inverse concave}}, if the map $\ti F \in C^{\8}(\Ga_{+}\x M)$ defined by
			\eq{\ti F(\la_{i},x)= - F(\la_{i}^{-1},x)\q \mbox{is concave.} }
		}
	\end{defn}
	
	We use the convention for the Riemann tensor from \cite{Gerhardt:/2006}. For a Riemannian or Lorentzian manifold $(M,g,\nabla)$,
	\eq{\Rm(X, Y) Z: = \nabla_Y \nabla_X Z - \nabla_X \nabla_Y Z - \nabla_{[Y,X]} Z}
	and we lower the upper index to the first slot:
	\eq{\Rm(W, X, Y, Z) = g(W, \Rm(X, Y) Z).}
	The respective local coordinate expressions are $(R^{m}_{jkl})$ and $(R_{ijkl})$.

	\begin{defn}\label{Phi def}~
		\enum{
			\item A pointwise curvature function $F\in C^{\8}(\Ga\x M)$ is 
			$\Phi$-inverse concave for some 
			\eq{\Phi\in C^{\8}(\Ga\x M, T^{4,0}(M)),}
			provided at all $\be>0$ we have
			\eq{F^{ij,kl}\eta_{ij}\eta_{kl}+2F^{ik}\ti\be^{jl}\eta_{ij}\eta_{kl}\geq \Phi^{ij,kl}\eta_{ij}\eta_{kl},}
			where $\ti\be^{ik}\be_{kj}=\de^{i}_{j}$.
			\item 	
			For $\al\in \Ga$ we define a curvature-adjusted modulus of $\Phi$-inverse concavity,
			\eq{\om_{F}(\al)(\eta,v)&=\Phi^{ij,kl}\eta_{ij}\eta_{kl}+D_{xx}^{2}F(v,v)+2D_{x^{k}}F^{ij}\eta_{ij}v^{k}\\
				&\hp{=~}+ \tr_{g}\Rm(\al^{\sharp},v,D_{\al^{\sharp}}F,v),}
                        where  $D$ denotes the product connection on $\bbR^{n\x n}\x M$. Here the curvature term denotes contracting the vector parts of the $(1,1)$ tensors $\al^{\sharp} = \al^i_j , D_{\al^{\sharp}} F = F^k_l$ with the Riemann tensor and tracing the resulting bilinear form with respect to the metric so that
                        \eq{\tr_{g}\Rm(\al^{\sharp},e_m,D_{\al^{\sharp}}F,e_m) = g^{jl} \al^i_j F^k_l R_{imkm}.}
		}
		
	\end{defn}

        \begin{rem}
        If $(A, x) \mapsto -F(A^{-1}, x)$ is concave (i.e., $F$ is inverse concave), then we take $\Phi=0$ and for all $(\eta,v)$ we have 
        \eq{\om_F(\eta,v)\geq \tr_g \Rm(\al^{\sharp},v,D_{\al^{\sharp}}F,v).}
        On several occasions, where there is a homogeneity condition on $F$, we will be able to choose a good positive $\Phi$ that allows to relax assumptions on the other variables of the operator $F$; see \Cref{sec:results}.
        \end{rem}

	We state the main result of the paper which contains \Cref{Codazzi version} as a special case. 
	\begin{thm}[Non-homogeneous CRT]\label{CRT}
		Let $(M,g,\nabla)$ be a connected Riemannian manifold and $\Ga$ an open, convex cone containing $\Ga_{+}$. Suppose $F\in C^{\8}(\Ga\x M)$ is a $\Phi$-inverse concave, strictly elliptic pointwise curvature function. Let $\alpha$ be a Codazzi, non-negative, symmetric $2$-tensor with eigenvalues in $\Ga$ and
		\eq{F(\alpha^{\sharp},\cdot) = 0\quad\mbox{on}~M.}
		Suppose for all $\Om\Subset M$ there exists a positive constant $c=c(\Om)$, such that for all eigenvectors $v$ of $\al^{\sharp}$ there holds
		\eq{\om_{F}(\al)(\n_{v}\al,v)\geq -c(\al(v,v)+\abs{\n\al(v,v)}).}
		Then $\alpha$ is of constant rank. 
	\end{thm}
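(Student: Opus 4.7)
The plan is to reduce the constant-rank conclusion to a single linear elliptic viscosity inequality for the sum of the bottom eigenvalues of $\al^{\sharp}$, and then invoke the strong maximum principle. Let $m_0 := \min_{x\in M}\mathrm{rank}\,\al^{\sharp}(x)$, attained at some $x_0\in M$, and set $r := n-m_0$. Writing $\la_1\leq\dots\leq\la_n$ for the eigenvalues of $\al^{\sharp}$ with respect to $g$, the subtrace $G_r(x) := \la_1(x)+\dots+\la_r(x)$ satisfies $G_r\geq 0$ on $M$ with $G_r(x_0)=0$. My goal is to show that on any relatively compact neighborhood $\Om\Subset M$ of $x_0$, the subtrace $G_r$ satisfies, in the viscosity sense,
\eq{F^{ij}(G_r)_{;ij}\leq C\br{G_r+\abs{\n G_r}}.}
The strong maximum principle for viscosity solutions (cf.\ \cite{Bardi1999}) then forces $G_r\equiv 0$ on $\Om$, and an open-closed argument using connectedness of $M$ and the minimality of $m_0$ propagates this to all of $M$, giving $\mathrm{rank}\,\al^{\sharp}\equiv m_0$.

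To establish the viscosity inequality, I would first differentiate $F(\al^{\sharp},\cdot)=0$ twice, use Codazzi to commute the first pair of covariant derivatives, and commute the second pair at the cost of a Riemann curvature term, obtaining in normal coordinates the tensorial PDE
\eq{F^{ij}\al_{ab;ij} = -F^{ij,kl}\al_{ij;a}\al_{kl;b}-D_{xx}^{2}F(e_a,e_b)-2D_{x^{a}}F^{ij}\al_{ij;b}+\text{(curvature)}+F^{ij}g_{ij}\al_{ab},}
mirroring \eqref{ell} from the introductory $\si_k$ example. I would then apply the subtrace extension of the Brendle--Choi--Daskalopoulos inequality (\Cref{BCD-extension lemma}) to $G_r$: for any smooth lower support $\xi$ of $G_r$ at $x\in\Om$, in normal coordinates diagonalizing $\al^{\sharp}$,
\eq{F^{ij}\xi_{;ij}\leq \sum_{a=1}^{r} F^{ij}\al_{aa;ij}-2\sum_{a=1}^{r}\sum_{j\cn\la_j>\la_a}\fr{F^{ii}}{\la_j-\la_a}(\al_{aj;i})^{2}.}
Substituting the PDE on the right and summing over $a$, the quadratic $F^{ij,kl}$-block combines with the divided-difference correction via $\Phi$-inverse concavity (\Cref{Def:F}(iii) together with \Cref{Phi def}) to produce, together with the $x$-derivatives and curvature contractions, precisely $\sum_{a=1}^{r}\om_F(\al)(\n_{e_a}\al,e_a)$. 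The hypothesis bounds this sum below by $-c\sum_{a=1}^{r}(\al(e_a,e_a)+\abs{\n\al(e_a,e_a)})\geq -c(G_r+\abs{\n G_r})$, while the leftover $F^{ij}g_{ij}\sum_{a=1}^{r}\la_a=F^{ij}g_{ij}G_r$ contributes at most $CG_r$. This yields the desired viscosity inequality.

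The main obstacle I anticipate is the bookkeeping inside the BCD-extension lemma when the bottom $r$ eigenvalues do not fit cleanly into a single cluster, i.e.\ when some $\la_j$ with $j>r$ coincides with some $\la_a$ for $a\leq r$. The exclusion set $\La$ appearing in the $\si_k$ computation must be adapted to track which indices contribute surviving second-derivative terms, and one must verify that the remnant of the $F^{ij,kl}$-block combines exactly with the $(\la_j-\la_a)^{-1}$-correction, the $x$-derivatives of $F$, and the Riemann contractions into $\sum_a\om_F(\al)(\n_{e_a}\al,e_a)$; the hypothesis on $\om_F$ is calibrated so that what remains is controlled by $G_r+\abs{\n G_r}$. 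Once this algebraic identification is in place, the strong maximum principle closes the argument exactly as in the introductory $\si_k$ example.
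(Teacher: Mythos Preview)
Your one-shot approach—going directly for the viscosity inequality for $G_r$ with $r$ the maximal kernel dimension—has a genuine gap, and it is not mere bookkeeping. The paper proceeds by induction on $m$: first show $\la_1\equiv 0$, then use $\la_1\equiv\dots\equiv\la_{m-1}\equiv 0$ to prove the viscosity inequality for $G_m$ and conclude $\la_m\equiv 0$. This inductive hypothesis is used in two indispensable ways that your direct argument cannot replicate.

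First, the divided-difference correction in \Cref{BCD-extension lemma} only runs over $j>D_r$, whereas $\Phi$-inverse concavity requires the full term $2F^{ik}\ti\al^{jl}\eta_{ij}\eta_{kl}$. In the paper's argument the gap between the two is bridged by truncating $\n_{e_k}\al$ to $\eta_k$ supported on indices $>D_m$; the remainder then consists of terms $\abs{\al_{jk;i}}$ with $j,k\leq D_m$. These are controlled via \Cref{nabla-alpha} (which needs $\la_{1}\equiv\dots\equiv\la_{m-1}\equiv 0$ to force $\n\al$ to vanish on the kernel and to equal $g\n\xi$ on $E_{j(m)}$) together with the extra negative terms coming from the ``sum over $q$'' device: since $G_q\equiv 0$ for $q<m$, the zero function is a lower support for each $G_q$, and \Cref{BCD-extension lemma} applied to these yields the additional inequalities \eqref{extra info} that are added in to absorb the cross terms with $D_q<j\leq D_m$. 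Without the inductive hypothesis none of this is available at a generic point where $\la_1,\dots,\la_{r-1}$ may be positive.

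Second, your final estimate $\sum_{a=1}^{r}\abs{\n\al(e_a,e_a)}\leq c\abs{\n G_r}$ is false in general: \Cref{BCD-extension lemma}(1) gives $\n_i\xi=\sum_{a=1}^{r}\al_{aa;i}$, so only $\abs{\sum_a\al_{aa;i}}$ is controlled, and the triangle inequality goes the wrong way. In the paper this collapses correctly only because, under the induction hypothesis, $\al_{aa;i}=0$ for $a\leq m-1$ (from \Cref{nabla-alpha}) and $\al_{mm;i}=\xi_{;i}$, so the sum has a single nonzero term. Thus the induction is not a stylistic choice but the mechanism that makes the algebra close; attempting $G_r$ in one step leaves uncontrolled first-derivative terms that cannot be bounded by $G_r+\abs{\n G_r}$.
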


	\begin{rem}
        It might seem more natural to replace the condition on $\om_F$ with the condition
        \eq{\om_{F}(\al)(\eta, v) \geq -c(\al(v,v)+\abs{\n\al(v,v)})}
        for every $\eta$ and all $v$. Indeed such a condition certainly leads to constant rank theorems since taking in particular $\eta = \nabla_v \al$, and $v$ and eigenvector, we may apply \Cref{CRT}. However, the requirement holding for all $\eta, v$ is too restrictive for applications such as in \Cref{Codazzi version}. See the proof in \Cref{sec:results} below where the required inequality is only proved to hold for $\eta = \nabla_v \al$ and $v$ an eigenvector.
        \end{rem}
	%

	An application of \Cref{CRT} to a non-homogeneous curvature problem is given in \Cref{non-homogeneous-appl}. Such a result was declared interesting in \cite{Guan2019a}. The full results are listed in \Cref{sec:results}.  
	

	CRT (also known as the microscopic convexity principle) was initially developed in \cite{MR792181} in two-dimensions for convex solutions of semi-linear equations, $\Delta u = f(u)$ using the maximum principle and the homotopy deformation lemma. The result was extended to higher dimensions in \cite{MR856307}. The continuity method combined with a CRT yields existence of strictly convex solutions to important curvature problems. For example, a CRT was an important ingredient in the study of prescribed curvature problems such as the Christoffel-Mink\-owski problem and prescribed Weingarten curvature problem \cite{Guan2003a,Guan2006a,GuanLinMa:/2006}. Later, general theorems for fully nonlinear equations were obtained in \cite{Caffarelli2007,Bian2009} under the assumption that $A \mapsto F(A^{-1})$ is locally convex.
		These approaches are based on the observation that a non-negative definite matrix valued function $A$ has constant rank if and only if there is a $\ell$ such that the elementary symmetric functions satisfy $\sigma_{\ell} \equiv 0$ and $\sigma_{\ell-1} > 0$. To apply this observation requires rather delicate, long computations and the introduction of clever auxiliary functions. The difficulties are at least in part due to the non-linearity of $\sigma_{\ell}$. An alternative approach was taken in \cite{Szekelyhidi2016,Szekelyhidi2020}, using a linear combination of lowest $m$ eigenvalues, which provides a linearity advantage at the expense of losing regularity compared with $\sigma_{\ell}$. The authors get around this difficulty by perturbing $A$ so that the eigenvalues are distinct (thus restoring regularity) but then using an approximation argument. Our approach based on the viscosity inequality shows that $G_m$ enjoys sufficient regularity to apply the strong maximum principle and this suffices to obtain a self-contained proof of the CRT.
		
 We remark here, that our method is capable of reproving the results in \cite{Caffarelli2007,Bian2009}, namely with the help of \Cref{viscosity} it is possible to prove that any convex solution $u$ to
	\eq{H(\n^{2}u,\n u,u,\cdot) = 0}
has constant rank under the assumption that 
\eq{(A,u,x)\mt -H(A^{-1},p,u,x)}
is concave for fixed $p$. This result does not follow from \Cref{CRT}, but by using a suitably redefined $\om_{F}$ in \Cref{viscosity}, this result follows in the same way as \Cref{CRT}. Here we rather want to focus on geometric problems.
	
We proceed as follows: In \Cref{sec:results} we collect and prove direct applications of \Cref{CRT}. In \Cref{visc} we prove the viscosity inequality satisfied by the subtrace, a result that is of interest by itself. After some further corollaries, we conclude with the proof of \Cref{CRT}.

	\section{Applications}
	\label{sec:results}
	In this section, we collect a few applications of \Cref{CRT}. We fix an assumption that we need on several occasions.

	\begin{assum}\label{assumption p-CM}
		Let $\Ga$ be as in \Cref{Def:F}.
		\enum{
			\item $\Psi\in C^{\8}(\Gamma)$ is a positive, strictly elliptic, homogeneous function of degree one and normalized to
			$\Psi(1,\dots,1)=n,$
			\item $\Psi$ is inverse concave.
		}
	\end{assum}
	
	Recall that such a function $\Psi$
	at invertible arguments $\be$ satisfies
	\eq{\label{inverse concavity}
		\Psi^{ij,kl}\eta_{ij}\eta_{kl}+2\Psi^{ik}\ti\be^{jl}\eta_{ij}\eta_{kl}\geq \fr{2}{\Psi}(\Psi^{ij}\eta_{ij})^{2}}
	for all symmetric $(\eta_{ij})$; see for example \cite{Andrews:/2007}.
	
	In order to facilitate notation, for covariant derivatives we use semi-colons, e.g., the components of the second derivative $\n^{2}T$ of a tensor are denoted by
	\eq{T_{;ij}=\n_{\del_{j}}\n_{\del_{i}}T-\n_{\n_{\del_{j}\del_{i}}}T.}
	
	First, we illustrate how \Cref{Codazzi version} follows from \Cref{CRT}.
	
	\begin{proof}[Proof of \Cref{Codazzi version}] 
		We define $F = \Psi - f.$ In view of \eqref{inverse concavity} and \Cref{Phi def}, we have
		\[\Phi^{ij,kl}\eta_{ij}\eta_{kl}=2\Psi^{-1}(\Psi^{ij}\eta_{ij})^{2}.\]
		
		Let $x_0\in M$ and $(e_i)_{1\leq i\leq n}$  be an orthonormal basis of eigenvectors for $\al^{\sharp}(x_0)$. In the associated coordinates, we calculate
		\eq{\omega_F(\al)( \n_{e_{m}}\al,e_m)&\geq 2f^{-1}f_{;m}^2 -f_{;mm}+\tau\Psi^{kr}\al^l_k(g_{lr} - g_{lm}g_{rm})\\
			&\geq 2f^{-1}f_{;m}^2 - f_{;mm}+\tau f - c\al_{mm}\\
			&=f^{2}\br{(f^{-1})_{;mm}+\tau f^{-1} } - c\al_{mm},
		}
		for some constant $c.$ Hence the claim follows from \Cref{CRT}.
	\end{proof}
	
	For a $C^2$ function $\zeta$ on a space $(M,g)$ of constant curvature $\tau_M$,
	\eq{\label{r}r_M[\zeta]:=\tau_M\nabla^2\zeta+ g\zeta.}

	The next theorem contains the full rank theorems from \cite{Guan2003a,Hu2004,Guan2006a} as special cases.
	\begin{thm}[$L_p$-Christoffel-Minkowski Type Equations]\label{FRT for p-CM}
		Suppose $(M,g,\nabla)$ is either the hyperbolic space $\bbH^{n}$ or the sphere $\bbS^{n}$ equipped with their standard metrics and connections.
		Let $\Psi$ satisfy \Cref{assumption p-CM}, $k\geq 1,$ $p\neq 0$ and $0<\phi,s\in C^{\infty}(M)$ satisfy
		\eq{r_{M}[s]&\geq 0,\; s^{1-p}\Psi^k(r_M[s])= \phi.}
		If either
		\eq{
			\left\{
			\begin{array}{ll}
				r_{\bbH^n}[\phi^{-\frac{1}{p+k-1}}]\geq 0, & p+k-1<0, \\
				\mbox{or}\\
				r_{\bbS^n}[\phi^{-\frac{1}{p+k-1}}]\geq 0, & p\geq 1,
			\end{array}
			\right.
		}
		then $r_M[s]$ is of constant rank. In particular, if $M=\bbS^n,$ then we have \eq{r_{\bbS^n}[s]>0.}
	\end{thm}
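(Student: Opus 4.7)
The plan is to apply \Cref{CRT} with $\al:=r_M[s]$ and the pointwise curvature function $F(\al,x):=\Psi(\al)-f(x)$, where $f:=s^{(p-1)/k}\phi^{1/k}$, so that $F(\al^{\sharp},\cdot)=0$ encodes the PDE. Inverse concavity of $\Psi$ gives (as in the proof of \Cref{Codazzi version}) the choice $\Phi^{ij,kl}\eta_{ij}\eta_{kl}=2\Psi^{-1}(\Psi^{ij}\eta_{ij})^2$, and the Codazzi property of $\al$ on a constant curvature space is a standard consequence of the Ricci identity and $R_{ijkl}=\tau_M(g_{ik}g_{jl}-g_{il}g_{jk})$. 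Once the $\om_F$-inequality is verified, \Cref{CRT} yields constant rank of $r_M[s]$; in the sphere case, evaluating at a global minimum $x_0$ of $s$ gives $\n^2 s(x_0)\geq 0$, hence $r_{\bbS^n}[s](x_0)=\n^2 s(x_0)+s(x_0)g>0$, upgrading ``constant rank'' to ``full rank''.

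At $x_0\in M$, in an orthonormal frame diagonalising $\al^{\sharp}$ with eigenvalues $\la_i$, the four contributions to $\om_F(\al)(\n_{e_m}\al,e_m)$ are: the $\Phi$-term $2f^{-1}f_{;m}^2$ (from the first derivative $\Psi^{ij}\al_{ij;m}=f_{;m}$ of the PDE); $D^2_{xx}F(e_m,e_m)=-f_{;mm}$; a vanishing mixed term since $\Psi$ has no explicit $x$-dependence; and, by Euler's formula $\Psi^{ij}\al_{ij}=f$ together with $R_{imim}=\tau_M(1-\de_{im})$, the curvature term $\tau_M(f-\la_m\Psi^{mm})$. Summing,
\eq{\om_F(\al)(\n_{e_m}\al,e_m)=f^2\br{(f^{-1})_{;mm}+\tau_M f^{-1}}-\tau_M\la_m\Psi^{mm}.}
Since $-\tau_M\la_m\Psi^{mm}\geq-c\la_m$ on compact subsets, the task reduces to bounding the bracket below by $-c\la_m$.

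For this, I would expand $\ln f^{-1}=a\ln s+b\ln\phi$ with $a=(1-p)/k$, $b=-1/k$, and make two substitutions: (i) the eigenvalue identity $\la_m=\tau_M s_{;mm}+s$ converts $as_{;mm}/s$ into $a\tau_M\la_m/s-a\tau_M$, producing exactly the allowed linear-in-$\la_m$ contribution; and (ii) setting $\ga:=-1/(p+k-1)$, the hypothesis $r_M[\phi^{\ga}]\geq 0$ reads
\eq{\tau_M\ga\phi_{;mm}/\phi+\tau_M\ga(\ga-1)(\phi_{;m}/\phi)^2+1\geq 0,}
and multiplication by the constant $b/(\tau_M\ga)=(p+k-1)/(k\tau_M)$—positive in both the sphere regime ($p\geq 1$, $p+k-1>0$) and the hyperbolic regime ($p+k-1<0$, $\tau_M=-1$)—gives a lower bound on $b\phi_{;mm}/\phi$. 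After these substitutions the scalar constants cancel via $(1-a)-(p+k-1)/k=0$ and $\tau_M^2=1$, leaving the residual quadratic form
\eq{Q:=a(a-1)X^2+2abXY+b(b-\ga)Y^2,\q X:=s_{;m}/s,\;Y:=\phi_{;m}/\phi,}
plus the welcome term $a\tau_M\la_m/s$.

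The main obstacle—and the reason why the specific exponent $-1/(p+k-1)$ appears in the hypothesis—is verifying $Q\geq 0$. A direct computation gives $a(a-1)\cdot b(b-\ga)=(p-1)^2/k^4=(ab)^2$, so the discriminant of $Q$ vanishes and $Q$ is a perfect square; combined with $a(a-1)=(p-1)(p+k-1)/k^2\geq 0$ (nonnegative in both sign regimes considered), this gives $Q\geq 0$. Hence $(f^{-1})_{;mm}+\tau_Mf^{-1}\geq a\tau_M\la_m f^{-1}/s$, so $\om_F(\al)(\n_{e_m}\al,e_m)\geq -c\la_m$ on compact subsets, and \Cref{CRT} delivers the desired constant rank conclusion.
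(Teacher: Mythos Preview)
Your proof is correct and follows essentially the same approach as the paper: set $F=\Psi-f$ with $f=(\phi s^{p-1})^{1/k}$, use inverse concavity to get the $\Phi$-term $2f^{-1}f_{;m}^2$, compute the curvature contribution on a constant-curvature space, and reduce the $\om_F$-inequality to an algebraic verification that the hypothesis $r_M[\phi^{-1/(p+k-1)}]\geq 0$ forces $2f^{-1}f_{;m}^2-f_{;mm}+\tau_M f\geq -c\al_{mm}$. The only difference is packaging: the paper substitutes $u=\phi^{1/k}$ and $q=(p+k-1)/k$ so that the residual cross terms collapse immediately into the visible perfect square $-\tfrac{q-1}{q}\bigl(\tfrac{u_{;m}}{u}+q\tfrac{s_{;m}}{s}\bigr)^2 f$, whereas you keep $\phi$ and verify that the quadratic form $Q$ has vanishing discriminant and nonnegative leading coefficient; these are the same observation in different coordinates.
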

	\begin{proof}
		Note that $\alpha=r_M[s]$  is a Codazzi tensor. We define
		\[F=\Psi -\left(\phi s^{p-1}\right)^{\frac{1}{k}}=\Psi-f.\]
		For simplicity, we rewrite $f=us^{q-1},$ where  $u=\phi^{\frac{1}{k}}$  and $q=\frac{p+k-1}{k}.$

		As in the proof of \Cref{Codazzi version}, we have
		\eq{\omega_F(\al)( \n_{e_{m}}\al,e_m)\geq
			2f^{-1}f_{;m}^2 - f_{;mm}+\tau_M f  - c\al_{mm}.
		}
		Now we calculate
		\eq{
			f_{;mm}-2f^{-1}f_{;m}^2-\tau_Mf&=-\left(\tau_Mqu+\frac{q+1}{q}\frac{(u_{;m})^2}{u}-u_{;mm}\right)s^{q-1}\\
			&\hp{=~}-\frac{q-1}{q}\left(\frac{u_{;m}}{u}+q\frac{s_{;m}}{s}\right)^2f\\
			&\hp{=~}+\tau_M(q-1)fs^{-1}r_M[s]_{mm}.
		}
		Therefore, if either $r_{\bbH^n}[u^{-\frac{1}{q}}]\geq 0,\; q<0$ or $r_{\bbS^n}[u^{-\frac{1}{q}}]\geq 0,\; q\geq 1,$ then
		\eq{f_{;mm}-2f^{-1}f_{;m}^2-\tau_Mf\leq c\al_{mm},}
		for some $c\geq 0.$ The result follows from \Cref{CRT}. Since $\bbS^n$ is compact, at some point $y$ we must have $r_{\mathbb{S}^n}[s](y)>0.$ Hence $r_{\mathbb{S}^n}[s]>0$ on $M.$ 	
	\end{proof}
	\begin{rem}
		Let $M=x(\Omega)$, $x:\Omega \hookrightarrow \bbR^{n,1}$ be a co-compact, convex, spacelike hypersurface. The support function of $M$, $s:\bbH^n\to \bbR$, is defined by
		$s(z)=\inf\{-\langle z,p\rangle;\, p\in M\},$ 
		and $r_{\bbH^n}[s]$ is non-negative definite. Moreover, if $r>0$, then the eigenvalues of $r$ with respect to $g$ are the principal radii of curvature; e.g., \cite{AndrewsChenFangMcCoy:/2015}. Therefore, the curvature problem stated in the previous theorem can be considered as an $L_p$-Christoffel-Minkowski type problem in the Minkowski space.
	\end{rem}
	
	In \cite{Guan2019a} the authors asked the validity of CRT for non-homogeneous curvature problems. In this respect we have the following theorem. First we have to recall the definition of the {\it{Garding cones}}:
	
	\eq{\Ga_{\ell} = \{\la\in \bbR^{n}\cn \si_{1}(\la)>0,\dots,\si_{\ell}(\la)>0\},}
	where $\si_{k}$ is the $k$-th elementary symmetric polynomial of the $\la_{i}$. In $\Ga_{\ell}$, all $\si_{k}$, $1\leq k\leq \ell$, are strictly elliptic and the $\si_{k}^{1/k}$ are inverse concave, see \cite{HuiskenSinestrari:09/1999}. For a cone $\Ga\sub\bbR^{n}$, on a Riemannian manifold $(M,g)$ a bilinear form $\al$ is called {\it{$\Ga$-admissible}}, if its eigenvalues with respect to $g$ are in $\Ga$.
	
	\begin{thm}[A non-homogeneous curvature problem]\label{non-homogeneous-appl}
		Let $\phi>0$ be a smooth function on $(\bbS^{n},g,\nabla)$ with
		\eq{\phi g-\n^{2}\phi\geq 0,}
		$\psi_{\ell}\equiv 1$ and $0< \psi_{k}\in C^{\8}(\bbS^{n})$ for $1\leq k\leq \ell-1$ satisfy \footnote{Note this forces $\psi_{1}$ to be constant.}
		\eq{\n^{2}\psi_{k}-\fr{k}{k+1}\fr{\n\psi_{k}\otimes\n\psi_{k}}{\psi_{k}}+(k-1)\psi_{k}\geq 0.}
		Let $\al$ be a $\Ga_{\ell}$-admissible, Codazzi, non-negative, symmetric $2$-tensor, such that
		\eq{\sum_{k=1}^{\ell}\psi_{k}(x)\si_{k}(\al,g)=\phi(x).\label{non-homogeneous curv prob}}
		Then $\al$ is of constant rank. In particular, when $\al=r_{\bbS^n}[s]\geq 0$ for some positive function $s\in C^{\infty}(\bbS^n)$, then in fact we have $\al>0.$
	\end{thm}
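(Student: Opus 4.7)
The plan is to apply \Cref{CRT} to the operator
\eq{F(\al^{\sharp},x)=\sum_{k=1}^{\ell}\psi_{k}(x)\si_{k}(\al)-\phi(x),}
which is smooth and strictly elliptic on $\Ga_{\ell}\x\bbS^{n}$ since $\psi_{k}>0$ and each $\si_{k}^{ij}$ is positive definite on $\Ga_{k}\supset\Ga_{\ell}$. The first substantive step is to identify $\Phi$. Since $\si_{k}^{1/k}$ is inverse concave on $\Ga_{k}$, a direct rearrangement using $(\si_{k}^{1/k})^{ij}=\fr{1}{k}\si_{k}^{1/k-1}\si_{k}^{ij}$ gives
\eq{\si_{k}^{ij,pq}\eta_{ij}\eta_{pq}+2\si_{k}^{ip}\ti\be^{jq}\eta_{ij}\eta_{pq}\geq \fr{k+1}{k\si_{k}}(\si_{k}^{ij}\eta_{ij})^{2},}
and multiplying by $\psi_{k}$ and summing yields the natural choice
\eq{\Phi^{ij,pq}\eta_{ij}\eta_{pq}=\sum_{k=1}^{\ell}\psi_{k}\fr{k+1}{k\si_{k}}(\si_{k}^{ij}\eta_{ij})^{2}.}

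The second step is to evaluate $\om_{F}(\al)(\n_{e_{m}}\al,e_{m})$ at an eigenvector $e_{m}$ of $\al^{\sharp}$ in normal coordinates, using $\si_{k}^{ij}\al_{ij;m}=(\si_{k})_{;m}$ and the spherical identity $R_{imkm}=g_{ik}-g_{im}g_{km}$ which reduces the curvature term to $\sum_{i\neq m}\la_{i}F^{ii}=\sum_{k}\psi_{k}[k\si_{k}-\la_{m}\si_{k}^{mm}]$. Collecting all four contributions to $\om_{F}$ I expect
\eq{\om_{F}=\sum_{k}\psi_{k}\fr{k+1}{k\si_{k}}((\si_{k})_{;m})^{2}+\sum_{k}(\psi_{k})_{;mm}\si_{k}-\phi_{;mm}+2\sum_{k}(\psi_{k})_{;m}(\si_{k})_{;m}+\sum_{k}\psi_{k}[k\si_{k}-\la_{m}\si_{k}^{mm}].}
The critical observation is to pair each $k$-th cross term with the matching $k$-th contribution from $\Phi$ via Cauchy--Schwarz, absorbing it into $-\sum_{k}\fr{k\si_{k}((\psi_{k})_{;m})^{2}}{(k+1)\psi_{k}}$; a single global Cauchy--Schwarz step would lose too much information.

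After this pairing, one is left with
\eq{\om_{F}(\al)(\n_{e_{m}}\al,e_{m})\geq \sum_{k}\si_{k}\br{(\psi_{k})_{;mm}-\fr{k((\psi_{k})_{;m})^{2}}{(k+1)\psi_{k}}+k\psi_{k}}-\phi_{;mm}-\la_{m}\sum_{k}\psi_{k}\si_{k}^{mm}.}
The assumed Hessian inequality on $\psi_{k}$ (trivially satisfied for $k=\ell$ since $\psi_{\ell}\equiv 1$) is precisely $(\psi_{k})_{;mm}-\fr{k((\psi_{k})_{;m})^{2}}{(k+1)\psi_{k}}\geq-(k-1)\psi_{k}$, so the bracket is bounded below by $\psi_{k}$, and the first sum is $\geq\sum_{k}\psi_{k}\si_{k}=\phi$ by the PDE. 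Combining with the hypothesis $\phi-\phi_{;mm}\geq 0$ gives $\om_{F}\geq -c\,\al(e_{m},e_{m})$, which is the hypothesis of \Cref{CRT}, so $\al$ has constant rank. For the final claim, when $\al=r_{\bbS^{n}}[s]$ with $s>0$, evaluating at a minimum of $s$ gives $\al\geq sg>0$ there, and constant rank propagates this to all of $\bbS^{n}$. The main obstacle is choosing $\Phi$ and organizing the Cauchy--Schwarz absorption correctly, so that the coefficient $\fr{k}{k+1}$ appearing in the hypothesis on $\psi_{k}$ matches exactly the coefficient produced by the inverse concavity of $\si_{k}^{1/k}$.
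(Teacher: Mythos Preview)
Your proof is correct and follows essentially the same route as the paper: the same choice of $F$ and of $\Phi^{ij,pq}\eta_{ij}\eta_{pq}=\sum_{k}\psi_{k}\fr{k+1}{k\si_{k}}(\si_{k}^{ij}\eta_{ij})^{2}$, the same evaluation of the curvature term on the sphere, and the same termwise Cauchy--Schwarz absorption of $2(\psi_{k})_{;m}(\si_{k})_{;m}$ into the $\Phi$-contribution, leaving the bracket $(\psi_{k})_{;mm}-\fr{k}{k+1}\fr{((\psi_{k})_{;m})^{2}}{\psi_{k}}+k\psi_{k}\geq \psi_{k}$ to which the hypotheses apply. The only cosmetic difference is that the paper separates out the $k=\ell$ term explicitly (yielding an extra $(\ell-1)\si_{\ell}\geq 0$), whereas you handle all $k$ uniformly by noting the bracket is trivially $\geq \psi_{\ell}=1$ when $\psi_{\ell}\equiv 1$.
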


	\begin{proof}
		The result follows quickly from \Cref{CRT}. We define
		\eq{F(\al,g,x) = \sum_{k=1}^{\ell}\psi_{k}(x)\si_{k}(\al,g) - \phi(x).} Since $\si_{k}^{1/k}$ is inverse concave and 1-homogeneous, $F$ is $\Phi$-inverse concave with
		\eq{\Phi^{pq,rs}\eta_{pq}\eta_{rs} := \sum_{k=1}^{\ell}\fr{k+1}{k}\psi_{k}\fr{\si_{k}^{pq}\si_{k}^{rs}}{\si_{k}}\eta_{pq}\eta_{rs}.}
		
		Let $x_0\in M$ and $(e_i)_{1\leq i\leq n}$  be an orthonormal basis of eigenvectors for $\al^{\sharp}(x_0).$
		Now using
		\eq{F^{kr}\al_k^{l}R_{liri}=F^{kr}\al_k^l(g_{lr}g_{ii}-g_{li}g_{ri})=\sum_{k=1}^{\ell} k\psi_k\si_k-F^{ii}\al^{ii},}
		we deduce
		\eq{&\omega_F(\al)(\nabla_{e_i}\al,e_i)+\phi_{;ii}\\
			\geq&\sum_{k=1}^{\ell}\br{\si_{k}\psi_{k;ii}+2\psi_{k;i}\si_{k;i}+\frac{k+1}{k}\frac{\psi_k}{\sigma_k}(\sigma_{k;i})^2+k\psi_{k}\si_{k} }- c\al_{ii}\\
			\geq&\sum_{k=1}^{\ell}\br{\psi_{k;ii}-\fr{k}{k+1}\fr{(\psi_{k;i})^{2}}{\psi_{k}}+(k-1)\psi_{k} + \psi_{k} }\si_{k}-c\al_{ii}\\
			=&\sum_{k=1}^{\ell-1}\br{\psi_{k;ii}-\fr{k}{k+1}\fr{(\psi_{k;i})^{2}}{\psi_{k}}+(k-1)\psi_{k} }\si_{k}+ \phi+ (\ell-1)\sigma_{\ell}- c\al_{ii}.}
		Therefore, $\omega_F(\al)(\nabla_{e_i}\al,e_i)+c\al_{ii}$ is non-negative for some constant $c.$ 
	\end{proof}

	Let $(N,\bar g,\bar{D})$ be a simply connected Riemannian or Lorentzian spaceform of constant sectional curvature $\tau_N$. That is, $N$ is either the Euclidean space $\bbR^{n+1}$, the sphere $\bbS^{n+1}$, the hyperbolic space $\bbH^{n+1}$ with respective sectional curvature $0,1,-1$ or the $(n+1)$-dimensional Lorentzian de Sitter space $\bbS^{n,1}$ with sectional curvature $1$. 
	
	Assume $M=x(\Omega)$ given by $x\cn \Omega\hra N$ is a connected, spacelike, locally convex hypersurface of $N$ and
	\eq{f\in C^{\8}(M\x\bbR_+\x\ti N),}
	where $\ti N$ denotes the {\it{dual manifold}} of $N$, i.e.,
	\eq{\ti \bbR^{n+1}=\bbS^{n},\quad\ti \bbS^{n+1}=\bbS^{n+1},\quad \ti \bbH^{n+1}=\bbS^{n,1},\quad \ti \bbS^{n,1}=\bbH^{n+1}.}
	Here $f$ is extended as a zero homogeneous function to the ambient space. We write $\nu, h, s$ for the future directed (timelike) normal, the second fundamental form and the support function of $M$, respectively  (cf. \cite{BIS6,Bryan2019}). The eigenvalues of $h$ with respect to the induced metric on $\Si$ are ordered as
	$\kappa_1\leq\dots\leq\kappa_n$ and we write in short \[\ka=(\kappa_1,\ldots,\kappa_n).\]
	
	The Gauss equation (cf. \cite[(1.1.37)]{Gerhardt:/2006}) relates extrinsic and intrinsic curvatures,
	\eq{\label{gauss_eq}R_{ijkl}&=\sigma(h_{ik}h_{jl}-h_{il}h_{jk})+\overline{\Rm}(x_{;i},x_{;j},x_{;k},x_{;l}) \\
          &= \sigma(h_{ik}h_{jl}-h_{il}h_{jk})+ \tau_N(\bar{g}_{ik} \bar{g}_{jl} - \bar{g}_{il} \bar{g}_{jk}),}
	where $\si = \bar g(\nu,\nu)$ and the second fundamental form is defined by
	\eq{\bar D_{X}Y = \n_{X}Y - \si h(X,Y)\nu.}

	\begin{thm}\label{thm:hypersurface}
		Let $(N,\bar g,\bar D)$ be one of the spaces above and let $\Psi$ satisfy \Cref{assumption p-CM}. Let $M$ be a connected, spacelike, locally convex and $\Ga$-admissible hypersurface such that
		\eq{\Psi(\ka)=f(x,s,\nu),}
		where $0<f\in C^{\8}(M\x \bbR_{+}\x\ti N)$ and \eq{\bar{D}^2_{xx}f^{-1}+\tau_Nf^{-1} \bar g\geq 0.}
		Then the second fundamental form of $M$ is of constant rank.
	\end{thm}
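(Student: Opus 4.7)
The plan is to apply \Cref{CRT} to the second fundamental form $\al=h$ with curvature function $F(h,x) := \Psi(h) - \ti f(x)$, where $\ti f(x) := f(x, s(x), \nu(x))$ denotes the restriction of $f$ along the hypersurface. The Codazzi property of $h$ follows from the Codazzi equation in a space of constant sectional curvature, non-negativity from local convexity of $M$, and strict ellipticity together with $\Phi$-inverse concavity of $F$ (with $\Phi^{ij,kl}\eta_{ij}\eta_{kl} = 2f^{-1}(\Psi^{ij}\eta_{ij})^2$) follow from \Cref{assumption p-CM} via \eqref{inverse concavity}. It remains only to verify the modulus estimate on $\om_F$ at each eigenvector of $h^{\sharp}$.

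Fix $x_0 \in M$ and let $(e_m)_{m=1}^n$ be a $g$-orthonormal eigenbasis of $h$ at $x_0$ with eigenvalues $\ka_m$; set $v=e_m$. Because $F^{ij}=\Psi^{ij}$ is independent of $x$, the mixed term $2D_{x^k}F^{ij}\eta_{ij}v^k$ in $\om_F$ vanishes. Differentiating the equation $\Psi(h) = \ti f$ yields $\Psi^{ij}h_{ij;m} = \ti f_{;m}$, so the $\Phi$-contribution equals $2f^{-1}\ti f_{;m}^{\,2}$, and $D_{xx}^2 F(v,v) = -\ti f_{;mm}$. Therefore
\[\om_F(h)(\n_{e_m}h, e_m) = 2f^{-1}\ti f_{;m}^{\,2} - \ti f_{;mm} + \tr_g\Rm(h^{\sharp}, e_m, D_{h^{\sharp}}F, e_m).\]

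To expand the curvature term I would invoke the Gauss equation \eqref{gauss_eq}: at $x_0$ the matrices $h^i_j$ and $\Psi^i_j$ are simultaneously diagonalized, and Euler's identity $\sum_i \ka_i \dot\Psi^i = \Psi = f$ gives
\[\tr_g\Rm(h^{\sharp}, e_m, D_{h^{\sharp}}F, e_m) = \tau_N f + O(\ka_m),\]
the $O(\ka_m)$ remainder collecting the $\si$-dependent quadratic-in-curvature contributions, which on any $\Om\Subset M$ are absorbed into $c\ka_m$ by uniform boundedness of the principal curvatures. For the remaining terms I would use the Weingarten equation $\n_{e_m}\nu = h_m^k x_{;k}$ and the standard expression for the support function to see that $s_{;m}, \n_{e_m}\nu = O(\ka_m)$ and $s_{;mm}, \n^2_{e_m e_m}\nu = O(\ka_m + \abs{\n h})$; the chain rule then gives
\[\ti f_{;m} = (\bar D_x f)(e_m) + O(\ka_m), \q \ti f_{;mm} = (\bar D_{xx}^2 f)(e_m, e_m) + O(\ka_m + \abs{\n h}).\]

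Substituting into the formula for $\om_F$ produces
\[\om_F(h)(\n_{e_m}h, e_m) \geq 2f^{-1}(\bar D_x f)(e_m)^2 - (\bar D_{xx}^2 f)(e_m, e_m) + \tau_N f - c(\ka_m + \abs{\n h}).\]
Expanding $\bar D_{xx}^2 f^{-1} = -f^{-2}\bar D_{xx}^2 f + 2f^{-3}\bar D_x f \otimes \bar D_x f$ and multiplying the hypothesis $\bar D_{xx}^2 f^{-1} + \tau_N f^{-1}\bar g \geq 0$ by $f^2$ shows that the first three terms on the right-hand side are non-negative, hence
\[\om_F(h)(\n_{e_m}h, e_m) \geq -c\br{h(e_m,e_m) + \abs{\n h(e_m,e_m)}},\]
and \Cref{CRT} concludes that $h$ has constant rank. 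The main obstacle is the careful chain-rule bookkeeping needed to extract the $O(\ka_m + \abs{\n h})$ remainders in the decomposition of $\ti f_{;m}$ and $\ti f_{;mm}$, together with uniform matching of the sign conventions in the Gauss equation across the Riemannian and Lorentzian ambient cases.
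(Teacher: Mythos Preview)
Your approach is essentially the same as the paper's: both set $F=\Psi-\tilde f$, invoke $\Phi$-inverse concavity with $\Phi^{ij,kl}\eta_{ij}\eta_{kl}=2f^{-1}(\Psi^{ij}\eta_{ij})^2$, compute the curvature term in $\om_F$ via the Gauss equation and Euler's identity, reduce the intrinsic derivatives of $\tilde f$ to the ambient $\bar D_x$-derivatives by the chain rule, and finish with the convexity hypothesis on $f^{-1}$. The paper outsources the chain-rule bookkeeping to \cite[p.~15]{BIS6}, whereas you sketch it directly via the Weingarten relation and the support-function identity; the content is the same.

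One point to tighten: your intermediate remainder is written as $O(\ka_m+|\nabla h|)$, but \Cref{CRT} requires the bound in terms of $|\nabla h(e_m,e_m)|$. The only $\nabla h$-terms that actually arise when you differentiate $\nu_{;m}=h_m^{\,k}x_{;k}$ (and the analogous expression for $s_{;m}$) are of the form $h_{mk;m}$, and Codazzi gives $h_{mk;m}=h_{mm;k}$, which is a component of $\nabla h_{mm}$. You invoke Codazzi at the outset, so this is a one-line fix, but it should be made explicit where you pass from $|\nabla h|$ to $|\nabla h(e_m,e_m)|$.
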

	
	\begin{proof}
		Define
		$F(h,g,x) = \Psi(h^{\sharp}) - f(x,s(x),\nu(x)).$
		Let $x_0\in M$ and $(e_i)_{1\leq i\leq n}$  be an orthonormal basis of eigenvectors for $h^{\sharp}(x_0).$ Now in  view of \Cref{CRT}, the claim follows from \cite[p.~15]{BIS6} and a computation using the Gauss equation \eqref{gauss_eq}:
		\eq{&\om_F(h)(\n_{e_{m}}h,e_{m}) \\
			\geq~& 2\Psi^{-1}(\Psi_{;m})^2 - \bar D^2_{xx}f(e_m,e_m)+ F^{ik}h^{l}_{i} R_{kmlm} - c(h_{mm} +\abs{\n h_{mm}})\\
			\geq~& 2\Psi^{-1}(\Psi_{;m})^2 - \bar D^2_{xx}f(e_{m},e_{m}) + \Psi^{ik}h^{l}_{i} \bar{R}_{kmlm}- c(h_{mm} + \abs{\n h_{mm}})\\
			\geq~& 2f^{-1}(\bar D_x f(e_{m}))^2 - \bar D^2_{xx}f(e_{m},e_{m}) +\tau_N \Psi^{ik}h^l_i(g_{kl} - g_{km} g_{lm})\\
			&- c(h_{mm}+ \abs{\n h_{mm}})\\
			\geq~& 2f^{-1}(\bar D_x f(e_{m}))^2 - \bar D^2_{xx}f(e_{m},e_{m}) + \tau_N f - c(h_{mm} + \abs{\n h_{mm}})\\
			\geq~& - c(h_{mm} + \abs{\n h_{mm}}).}		
	\end{proof}
	
	The following corollary contains the CRT from \cite{GuanLinMa:/2006,Guan2009c} as special cases.
	
	\begin{cor}[Curvature Measures Type Equations]
		Suppose the curvature function $\Psi$ satisfies \Cref{assumption p-CM}, $1\leq k\leq n-1$, $p\in \bbR$ and $0<\phi\in C^{\infty}(\mathbb{S}^n)$. Let $M$ be a $\Ga$-admissible convex hypersurface of $\bbR^{n+1}$ which encloses the origin in its interior and suppose
		\eq{\Psi(\ka)=\langle x,\nu \rangle^{p} |x|^{-\frac{n+1}{k}}\phi\br{\frac{x}{|x|}}^{\frac{1}{k}}.}
		If \eq{|x|^{\frac{n+1}{k}}\phi\br{\frac{x}{|x|}}^{-\frac{1}{k}}~\mbox{is convex on}~\bbR^{n+1}\setminus\{0\},}
		then $M$ is strictly convex.
	\end{cor}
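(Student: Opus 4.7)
The plan is to recognize this corollary as a direct specialization of \Cref{thm:hypersurface} with $N=\bbR^{n+1}$, so that $\tau_N=0$. Since in the Euclidean setting the support function is $s(x)=\langle x,\nu\rangle$, I set
\eq{f(x,s,\nu):= s^{p}\, |x|^{-\frac{n+1}{k}}\, \phi\!\br{\fr{x}{|x|}}^{\frac{1}{k}},}
so the prescribed curvature equation is exactly $\Psi(\ka)=f(x,s,\nu)$. Since $f$ does not involve $\nu$, it is trivially zero-homogeneous in $\nu$ once extended to the ambient space; the remaining hypotheses of \Cref{thm:hypersurface} ($M$ connected, $\Ga$-admissible, $\Psi$ satisfying \Cref{assumption p-CM}, and spacelike in the Riemannian case) are either hypothesized or automatic.

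The only substantive step is to verify the curvature-adjusted convexity condition
\eq{\bar D^{2}_{xx} f^{-1} + \tau_N f^{-1} \bar g\geq 0.}
With $\tau_N=0$, this collapses to pure convexity in $x$ at fixed $s>0,\nu$. Explicitly,
\eq{f^{-1}(x,s,\nu) = s^{-p}\cdot |x|^{\frac{n+1}{k}}\, \phi\!\br{\fr{x}{|x|}}^{-\frac{1}{k}},}
and since $s^{-p}$ is a fixed positive constant when $s$ is fixed, $\bar D^{2}_{xx}f^{-1}\geq 0$ is equivalent to convexity of $|x|^{(n+1)/k}\phi(x/|x|)^{-1/k}$ on $\bbR^{n+1}\setminus\{0\}$, which is precisely the standing assumption.

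Applying \Cref{thm:hypersurface}, the second fundamental form $h$ has constant rank on $M$. To upgrade constant rank to strict positivity, I would use that $M$ is compact (being the boundary of a convex body containing the origin): the function $|x|^2|_{M}$ attains its maximum at some $p_0\in M$, and a standard enclosing-sphere comparison at $p_0$ forces every principal curvature there to be at least $1/|p_0|>0$, giving $h(p_0)>0$. Constant rank then propagates $h>0$ to all of $M$, i.e.\ $M$ is strictly convex. I do not anticipate any real obstacle; the only small verification is the identification of $f$ and the reduction of the $\bar D^{2}_{xx}f^{-1}$ condition to the hypothesized convexity, both of which are immediate from the formulas.
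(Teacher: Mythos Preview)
Your proposal is correct and matches the paper's intended argument: the corollary is stated immediately after \Cref{thm:hypersurface} with no proof, so it is meant as a direct specialization with $N=\bbR^{n+1}$, $\tau_N=0$, and $f(x,s,\nu)=s^{p}|x|^{-(n+1)/k}\phi(x/|x|)^{1/k}$, exactly as you set it up. Your verification that $\bar D^{2}_{xx}f^{-1}\geq 0$ reduces to the assumed convexity of $|x|^{(n+1)/k}\phi(x/|x|)^{-1/k}$, together with the compactness argument (farthest point from the origin) to upgrade constant rank to full rank, supplies precisely the details the paper omits.
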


	\section{A Viscosity Approach}\label{visc}
	
	The following lemma served as the main motivation for us to study the constant rank theorems with a viscosity approach. It shows that the smallest eigenvalue of a bilinear form satisfies a viscosity inequality. In the context of extrinsic curvature flows a similar approach was taken to prove preservation of convex cones; see \cite{Langford14,Langford17}. There it was shown that the distance of the vector of eigenvalues to the boundary of a convex cone satisfies a viscosity inequality. 
	
	\begin{lemma}\label{Brendle's lemma}\cite[Lem. 5]{BrendleChoiDaskalopoulos:/2017} Let the eigenvalues of a symmetric 2-tensor $\al$ with respect to a metric $(g,\nabla)$ at $x_0$ be ordered via
		\eq{\lambda_1=\cdots=\lambda_{D_1}<\lambda_{D_1+1}\leq \cdots\leq \lambda_n,}
		for some $D_1\geq 1.$
		Let $\xi$ be a lower support for $\lambda_1$ at $x_0.$ That is, $\xi$ is a smooth function such that in an open neighborhood of $x_0$,
		\eq{\xi\leq \lambda_1}
		and $\xi(x_0)=\lambda_1(x_0).$ Choose an orthonormal frame for $T_{x_0}M$ such that
		\eq{\alpha_{ij}=\delta_{ij}\lambda_i,\quad g_{ij}=\delta_{ij}.} Then at $x_0$ we have for $1\leq k\leq n$,
		\begin{enumerate}
			\item \eq{\alpha_{ij;k}=\delta_{ij}\xi_{;k}\quad  1\leq i,j\leq D_1,}
			\item \eq{\xi_{;kk}\leq \alpha_{11;kk}-2\sum_{j>D_1}\frac{(\alpha_{1j;k})^2}{\lambda_j-\lambda_1}.}
		\end{enumerate}
	\end{lemma}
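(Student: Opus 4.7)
The plan is to exploit the variational characterization of the smallest eigenvalue: for any smooth unit vector field $w$ defined in a neighborhood of $x_0$, one has $\alpha(w,w) \geq \lambda_1 \geq \xi$ pointwise. If $w(x_0)$ is chosen in the $\lambda_1$-eigenspace, then $\alpha(w,w)(x_0) = \lambda_1(x_0) = \xi(x_0)$, so $x_0$ is a local minimum of the smooth function $h := \alpha(w,w) - \xi$. All the information in the lemma will then drop out of the first- and second-order necessary conditions $\nabla h(x_0) = 0$ and $\nabla^2 h(x_0) \geq 0$, together with the pointwise constraint $g(w,w) = 1$.

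First I would prove (1). Working in normal coordinates adapted to the given eigenframe, take $w(x_0) = e_1$ and compute
\eq{h_{;k}(x_0) = \alpha_{11;k} + 2\alpha_{1j} w^j_{;k}(x_0) - \xi_{;k}.}
Differentiating $g(w,w) = 1$ gives $w^i w^i_{;k} = 0$, and at $x_0$ this forces $w^1_{;k}(x_0) = 0$; since $\alpha_{1j}(x_0) = \lambda_1 \delta_{1j}$, the middle term vanishes and we conclude $\alpha_{11;k} = \xi_{;k}$. Repeating with $w(x_0)$ an arbitrary unit vector in $\linspan(e_1,\dots,e_{D_1})$ yields $\alpha_{ij;k} v^i v^j = \xi_{;k}$ for all such $v$, and a standard polarization upgrades this to (1).

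Next I would prove (2) by expanding $h_{;kk}(x_0) \geq 0$. Careful use of the product rule, the first-order unit-length relation, its second derivative $w^i w^i_{;kk} + w^i_{;k} w^i_{;k} = 0$, and part (1) (which kills the cross terms with indices $\leq D_1$) yields
\eq{h_{;kk}(x_0) = \alpha_{11;kk} - \xi_{;kk} + 4\sum_{j>D_1}\alpha_{1j;k}\, w^j_{;k} + 2\sum_{j>D_1}(\lambda_j - \lambda_1)(w^j_{;k})^2.}
The quantities $w^j_{;k}$ for $j > D_1$ remain free parameters (the unit-length constraint only pins down $w^1_{;k}$), so minimizing the quadratic in each $w^j_{;k}$ at $w^j_{;k} = -\alpha_{1j;k}/(\lambda_j - \lambda_1)$ — valid precisely because $\lambda_j > \lambda_1$ for $j > D_1$ — produces the sharp bound
\eq{\xi_{;kk} \leq \alpha_{11;kk} - 2\sum_{j>D_1}\frac{(\alpha_{1j;k})^2}{\lambda_j - \lambda_1}.}

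The main obstacle, such as it is, is bookkeeping: one must justify that a unit vector field $w$ realizing any prescribed values of $w^j_{;k}$ for $j > D_1$ at $x_0$ actually exists, which is straightforward by writing $w(x) = (e_1 + \sum_{j>D_1} a_j^k (x-x_0)^k e_j)/|\cdot|$ in normal coordinates and verifying the normalization contributes only to $w^1_{;k}$ and $w^1_{;kk}$. The rest is a disciplined application of the chain and product rules, plus the observation that the gap $\lambda_j - \lambda_1 > 0$ for $j > D_1$ makes the optimization in (2) well-defined. Note that eigenvalues above $\lambda_1$ never enter as denominators in the range $j \leq D_1$, which is what makes the lemma robust against the failure of $\lambda_1$ to be a simple eigenvalue.
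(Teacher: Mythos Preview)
Your argument is correct and is essentially the same approach the paper uses: the paper does not prove this lemma directly (it simply cites \cite{BrendleChoiDaskalopoulos:/2017}), but when it proves the generalization in \Cref{BCD-extension lemma} it does exactly what you do in the special case $m=1$, namely build an upper barrier $\Theta = \alpha(w,w)$ from a test vector field with $w(x_0)=e_1$ and $\dot w(0)\perp \bar E_{j(1)}$, read off the first- and second-order optimality conditions for $\Theta-\xi$, and then optimize over the free components $\dot w^j(0)$ for $j>D_1$. The only cosmetic difference is that the paper works with vector fields along a single geodesic in the direction $e_i$ rather than with a vector field in normal coordinates, which is equivalent since the inequality is proved one direction $k$ at a time.
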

	
	While the previous lemma is sufficient for full rank theorems (i.e., when the respective linear map is non-negative, and positive definite at least at one point), we need to generalize \cite[Lem. 5]{BrendleChoiDaskalopoulos:/2017} from the smallest eigenvalue to an arbitrary subtrace of a matrix to treat constant rank theorems.
	
	To formulate the following lemma, we introduce some notation. For a symmetric $2$-tensor $\alpha$ on a vector space $V$ with inner product $g$, let $\alpha^{\sharp}$ be the metric raised endomorphism defined by $g(\alpha^{\sharp}(X), Y) = \alpha(X, Y)$. Then $\alpha^{\sharp}$ is diagonalizable and we write \[\la_1 \leq \dots \leq \la_n\] for the eigenvalues with distinct eigenspaces $E_k$ of dimension $d_k = \dim E_k$, $1 \leq k \leq N$. For convenience, let $E_0 = \{0\}$ and $d_0 = 0$. Define
	\eq{\bar{E}_j = \bigoplus_{k=0}^j E_k, \quad \bar{d}_j = \operatorname{dim} \bar{E}_j}
	for $0\leq j \leq N$ so that
	\eq{\{0\} = \bar{E}_0 \subsetneq \bar{E}_1 \subsetneq \cdots \subsetneq \bar{E}_N = V, \quad \bar{E}_k = \bar{E}_{k-1} \oplus E_k.}
	
	Let $(e_j)_{1 \leq j \leq n}$ be an orthonormal basis of eigenvectors corresponding to the eigenvalues $(\lambda_j)_{1\leq j \leq n}$ giving $E_k = \linspan\{e_{\bar{d}_{k-1}+1}, \dots, e_{\bar{d}_k}\}$ and $\bar{E}_k = \linspan\{e_1, \dots, e_{\bar{d}_k}\}$. For each $1 \leq m \leq n$, there is a unique $j(m)$ such that
	\eq{\bar{E}_{j(m)-1} \subsetneq V_m := \linspan \{e_1, \dots, e_m\} \subseteq \bar{E}_{j(m)}.}
	Then $\bar{d}_{j(m)-1} < m \leq \bar{d}_{j(m)}$. For convenience, we write
	\eq{\label{eq:Dm}\Dm := \bar{d}_{j(m)}.}
	Note that $D_m$ is the largest number such that
	\eq{\lambda_1\leq \cdots\leq \lambda_m=\cdots=\lambda_{D_m}<\lambda_{D_m+1}\leq \cdots\leq \lambda_n,}
	and hence
	\[\bar{E}_{j(m)}=\linspan\{e_1,\ldots,e_{D_m}\}.\]
	The subspace $V_m$ is invariant under $\alpha^{\sharp}$ and the trace of $\alpha^{\sharp}$ restricted to $V_m$ is the subtrace,
	\eq{G_{m}:=\sum_{k=1}^{m}\la_{k}.}
	This subtrace is characterized by Ky Fan's maximum principle (cf. \cite[Thm. 6.5]{MR2325304}), taking the infimum with respect to all traces of $\pi_P \circ \alpha^{\sharp}|_P$ over $m$-planes of the tangent spaces where $\pi_P$ is orthogonal projection onto an $m$-plane $P$:
	\eq{G_{m} &= \inf_P \{\tr \pi_P \circ \alpha^{\sharp}|_P\cn P = m\text{-plane}\} \\
		&= \inf_{(w_{k})_{1\leq k\leq m}}\left\{\sum_{k,l=1}^{m}g^{kl}\al(w_{k},w_{l})\cn (g(w_{k},w_{l}))_{1\leq k,l\leq m}>0\right\},}
	where $(g^{kl})$ is the inverse of $g_{kl}=g(w_{k},w_{l})$. Now suppose $\al$ is a bilinear form on a Riemannian manifold $(M,g)$, $x_0\in M$ and $(e_{i})_{1\leq i\leq n}$ is an orthonormal basis of eigenvectors at $x_{0}$ with eigenvalues
	\eq{\la_{1}(x_{0})\leq \dots\leq \la_{n}(x_{0}).} Letting $w_i(x)$, $1 \leq i \leq m$, be any set of linearly independent local vector fields around $x_0$ with $w_i(x_0) = e_i$, then we have a smooth upper support function for $G_m$ at $x_0$:
	\eq{\Theta(x) := \sum_{k,l=1}^{n} g^{kl} \alpha_{kl} \geq G_m(x), \quad \Theta(x_0) = G_m(x_0),}
	where $\alpha_{kl} = \alpha(w_k(x), w_l(x))$. We make use of $\Theta$ to prove the next lemma generalizing \Cref{Brendle's lemma}.
	
	\begin{lemma}\label{BCD-extension lemma}
		Let $(M,g)$ be a Riemannian manifold and let $\alpha$ be a symmetric $2$-tensor on $TM$. Suppose $1\leq m\leq n$ and  $\xi$ is a (local) lower support at $x_0$ for the subtrace $G_m(\alpha^{\sharp})$. Then at $x_0$ we have
		\begin{enumerate}
			\item \eq{\xi_{;i} = \tr_{V_m} \al_{;i} = \sum_{k=1}^m \al_{kk;i},}
			\item \eq{\xi_{;ii}\leq& \sum_{k=1}^{m}\al_{kk;ii}-2\sum_{k=1}^{m}\sum_{r>D_m}\fr{(\al_{kr;i})^{2}}{\la_{r}-\la_{k}},}
		\end{enumerate}
		where $V_m = \linspan \{e_1(x_0), \dots, e_m(x_0)\}$ for any choice of $m$ orthonormal eigenvectors $e_k$ with corresponding eigenvalues $\lambda_1, \dots, \lambda_m$ satisfying
		\eq{\lambda_1\leq \cdots\leq \lambda_m=\cdots=\lambda_{D_m}<\lambda_{D_m+1}\leq \cdots\leq \lambda_n.}
	\end{lemma}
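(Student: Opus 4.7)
The plan is to construct a smooth upper support $\Theta$ for $G_m$ at $x_0$ via Ky Fan's minimum principle, compare its derivatives to those of the lower support $\xi$, and then optimize over the construction. Given any smooth local vector fields $w_1,\dots,w_m$ near $x_0$ that are linearly independent at $x_0$ with $w_k(x_0)=e_k$, the characterization recalled before the lemma shows that
\[
\Theta(x):=\sum_{k,l=1}^{m}g^{kl}(x)\,\alpha(w_k,w_l)(x)
\]
satisfies $\Theta\ge G_m$ near $x_0$ and $\Theta(x_0)=G_m(x_0)$. Therefore $\xi-\Theta$ has a local maximum at $x_0$, yielding $\n(\xi-\Theta)(x_0)=0$ and a negative semi-definite Hessian of $\xi-\Theta$ at $x_0$.

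For the explicit computation I would work in normal coordinates at $x_0$ whose coordinate vectors agree with $e_1,\dots,e_n$, using the ansatz
\[
w_k=\partial_k+\sum_{r>\Dm}B_{kr}(x)\,\partial_r,\qquad B_{kr}(x_0)=0,
\]
leaving the first derivatives $\partial_i B_{kr}(x_0)$ as free parameters. Restricting the sum to $r>\Dm$ rather than $r>m$ is essential: mixing $V_m$ into the remaining directions inside the $\la_m$-eigenspace does not alter $G_m$ to first order, and only the range $r>\Dm$ ensures the denominators $\la_r-\la_k$ appearing below are strictly positive. A Taylor expansion at $x_0$, using $\partial_j g_{ab}(x_0)=0$ and the normal-coordinate identity $\partial_i^2 g_{kk}(x_0)=2\,\partial_i\Gamma_{ik}^{\,k}(x_0)$ (no sum), yields $\partial_i\Theta(x_0)=\sum_{k=1}^m\alpha_{kk;i}$, which proves (1), together with
\[
\partial_i^2\Theta(x_0)=\sum_{k=1}^m\alpha_{kk;ii}+\sum_{k\le m,\,r>\Dm}\Bigl[2(\la_r-\la_k)(\partial_i B_{kr})^2+4\,\partial_i B_{kr}\,\alpha_{kr;i}\Bigr].
\]
The crucial cancellation is that the curvature terms produced by expanding the inverse of $g(w_k,w_l)$ combine with the Christoffel corrections hidden inside $\partial_i^2\alpha(w_k,w_k)$ to give exactly the covariant second derivative $\alpha_{kk;ii}$, leaving no explicit curvature term.

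Finally, since the parameters $\partial_i B_{kr}(x_0)$ may be prescribed independently for each triple $(i,k,r)$, minimizing the bracketed quadratic expression pointwise — the minimum is attained at $\partial_i B_{kr}(x_0)=-\alpha_{kr;i}/(\la_r-\la_k)$ and equals $-2(\alpha_{kr;i})^2/(\la_r-\la_k)$ — and combining with $\xi_{;ii}(x_0)\le\partial_i^2\Theta(x_0)$ in normal coordinates gives (2). The main obstacle will be the bookkeeping in the Hessian computation, in particular verifying the described cancellation of curvature-type terms between the inverse-metric expansion and the partial-to-covariant correction on $\alpha$, so that the clean, curvature-free statement of the lemma emerges.
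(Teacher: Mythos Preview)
Your proposal is correct and follows the same strategy as the paper: build a smooth upper support $\Theta$ for $G_m$ via Ky Fan by choosing an $m$-frame $w_k$ with $w_k(x_0)=e_k$, restrict the first-order variation of $w_k$ to directions with index $r>D_m$, and optimize over those variations to produce the $-2(\al_{kr;i})^2/(\la_r-\la_k)$ terms. The only real difference is the implementation. You work in normal coordinates on $M$ with the ansatz $w_k=\partial_k+\sum_{r>D_m}B_{kr}\partial_r$ and partial derivatives, which obliges you to check the curvature cancellation you flag; it does hold, since $\partial_i^2 g_{kk}(x_0)=2\,\partial_i\Gamma^k_{ik}(x_0)$ exactly offsets the partial-to-covariant correction $\partial_i^2\al_{kk}(x_0)-\al_{kk;ii}(x_0)=2\la_k\,\partial_i\Gamma^k_{ik}(x_0)$ after multiplying by $-\la_k$ from the inverse-metric expansion. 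The paper instead fixes $i$, restricts to a geodesic $\gamma$ through $x_0$ in direction $e_i$, takes vector fields $w_k(s)$ along $\gamma$ with covariant acceleration $\ddot w_k(0)=0$, and differentiates covariantly in $s$; this makes every derivative covariant from the outset, so the curvature bookkeeping you anticipate never arises. Both routes give the same optimizer $\dot w_k(0)=-\sum_{r>D_m}\al_{kr;i}(\la_r-\la_k)^{-1}e_r$ and the same inequality.
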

	
	\pf{For this proof we use the summation convention for indices ranging between $1$ and $m$.
		Let $\xi$ be a lower support for $G_m$ at $x_{0}$. Fix an index $1\leq i\leq n$ and let $\ga(s)$ be a geodesic with $\ga(0)=x_{0}$ and $\dot\ga(0)=e_{i}(x_0)$. Let $(v_k)_{1\leq k \leq m}$ be any basis (not necessarily orthonormal) for $V_m$ as in the statement of the lemma. As mentioned above, for any $m$ linearly independent vector fields $(w_k(s))_{1\leq k \leq m}$  along $\ga$ with $w_k(0) = v_k(x_0)$, $\al_{kl} = \al(w_{k},w_{l})$ and $(g^{kl}) = (g(w_{k},w_{l}))^{-1}$, the function
		\eq{\Theta(s) := g^{kl} \alpha_{kl} -  \xi(\gamma(s))}
		satisfies
		\eq{\Theta(s) \geq 0, \; \Theta(0) = 0}
		and hence
		\eq{\dot{\Theta}(0) = 0, \; \ddot{\Theta}(0) \geq 0.}
		Since $V_m \subseteq \bar{E}_{j(m)}$, choosing $w_k$ such that $\dot{w}_k(0) \perp \bar{E}_{j(m)}(x_0)$ gives
		\eq{\dot{g}_{kl}(0) = g(\dot{w}_k(0), v_l) + g(v_k, \dot{w}_l(0)) = 0}
		and hence also
		\eq{\dot{g}^{kl}(0) = -g^{ka}(0) \dot{g}_{ab}(0)g^{bl}(0) = 0.}
		
		Then we compute
		\eq{
			0 = \dot{\Theta}(0) = \left(g^{kl}  \alpha_{kl;i} - \xi_{;i}\right)|_{x_0}
		}
		giving the first part.
		
		Now we move on to the second derivatives. For this we make the additional assumptions, $v_k = e_k$ and $\ddot{w}_k(0) = 0$. We first calculate
		\eq{\ddot{g}^{kl}(0) &= g^{km}\dot{g}_{mr}g^{ra}\dot{g}_{ab}g^{bl}-g^{ka}\ddot{g}_{ab}g^{bl}+g^{ka}\dot{g}_{ab}g^{bm}\dot{g}_{mr}g^{rl} \\
			&= -\de^{ka}\ddot{g}_{ab}(0)\de^{bl},}
		since $\dot{g}_{kl}(0) = 0$ and $g^{kl}(0) = \delta^{kl}$. Then from $\ddot{w}_k(0) = 0$ we obtain
		\eq{\ddot{g}^{kl}(0) & = -\left[g(\ddot{w}_{k},w_{l})+g(w_{k},\ddot{w}_{l})+2g(\dot{w}_{k},\dot{w}_{l})\right](0) \\
			&= -2 \de^{ka}g(\dot{w}_a(0), \dot{w}_b(0))\de^{bl}.}
		
		From the local minimum property,
		\eq{0&\leq \ddot{\Theta}(0)\\
		&=\ddot{g}^{kl}(0)\al_{kl}+\de^{kl}\fr{d^{2}}{ds^{2}}_{|s=0}\al_{kl}(s)-\xi_{;ii}(x_{0})\\
			&=-2g(\dot{w}_{k}(0),\dot{w}_{l}(0))\al^{kl} + \de^{kl}\al_{kl;ii}\\
			&\hp{=~}+4\de^{kl}\n_{i}\al(\dot{w}_{k}(0),w_{l}(0)) + 2\de^{kl}\al(\dot{w}_{k}(0),\dot{w}_{l}(0))-\xi_{;ii}(x_{0})\\
			&= \sum_{k=1}^m \al_{kk;ii} -\xi_{;ii}(x_{0}) \\
			&\hp{=~} +2\sum_{k=1}^m \br{2\n_{i}\al(\dot{w}_{k}(0),e_{k}) + \al(\dot{w}_{k}(0),\dot{w}_{k}(0)) - g(\dot{w}_{k}(0),\dot{w}_{k}(0))\la_k}.}
		
		From $\dot{w}_k(0) \perp \bar{E}_{j(m)}$, we may write $\dot{w}_k (0) = \sum\limits_{r>D_m} c_k^r e_r$ giving
		\eq{
			\xi_{;ii}(x_{0})-\sum_{k=1}^{m}\al_{kk;ii} &\leq 2\sum_{k=1}^{m} \sum_{r>D_m} \br{2 c_k^r \al_{kr;i} + (c_k^r)^2 \la_r - (c_k^r)^2\la_k }\\
			&= 2\sum_{k=1}^{m} \sum_{r>D_m} c_k^r \left(2\al_{kr;i} + c_k^r(\la_r - \la_k)\right).
		}
		Optimizing yields the specific choice
		\eq{\dot{w}_{k}(0) = -\sum_{r> D_m}\fr{\al_{kr;i}}{\la_{r}-\la_{k}}e_{r}.}
		From this we obtain
		\eq{\xi_{;ii}(x_{0})-\sum_{k=1}^{m}\al_{kk;ii} \leq& -2\sum_{k=1}^{m} \sum_{r>D_m} \fr{\al_{kr;i}}{\la_{r}-\la_{k}} \left(2\al_{kr;i} - \al_{kr;i}\right) \\
			=& -2\sum_{k=1}^{m}\sum_{r>D_m}\fr{(\al_{kr;i})^{2}}{\la_{r}-\la_{k}}.}
	}
	
	\begin{cor}\label{nabla-alpha}
		Let $\al$ be a non-negative, symmetric 2-tensor on $TM$. Suppose for some $1 \leq m \leq n$ that $\dim \ker \al^{\sharp} \geq m-1$ or equivalently that the eigenvalues of $\al^{\sharp}$ satisfy
		$\la_{1} \equiv \dots \equiv \la_{m-1} \equiv 0.$
	Then for all $x_{0}$ and any lower support $\xi$ for $G_{m}$ at $x_{0}$ and all $1\leq i\leq n$ we have
	\begin{enumerate}
	\item $(\n_i\al(x_{0}))_{|\ker \al^{\sharp} \x \ker \al^{\sharp}} = 0,$
	\item	$(\n_i\al(x_{0}))_{|E_{j(m)} \x E_{j(m)}} = g\n_i \xi(x_{0}),\q\mbox{if}~\la_{m}(x_{0})>0.$
    \end{enumerate}
	\end{cor}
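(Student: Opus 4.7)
The two parts are proved separately: part (1) by a direct minimum-principle argument on quadratic forms, and part (2) by invoking \Cref{BCD-extension lemma}(1) with the freedom to vary the orthonormal eigenbasis, combined with part (1).

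\emph{Part (1).} Fix $v \in \ker\al^{\sharp}(x_0)$ and extend it smoothly to a local vector field $V$ near $x_0$. Because $\al \geq 0$, the function $f_V(x) := \al(V(x), V(x))$ is non-negative with $f_V(x_0) = \al(v,v) = 0$, so $x_0$ is a minimum of $f_V$ and $\n f_V(x_0) = 0$. Expanding with the product rule and using $\al(\cdot, v) = 0$ at $x_0$, this reads
\[
0 = e_i f_V(x_0) = \n_i\al(V, V)(x_0) + 2\al(\n_i V, V)(x_0) = \n_i\al(v, v).
\]
Since $\ker\al^{\sharp}(x_0)$ is a vector subspace, $v + w$ also lies in it, so polarising over pairs $v, w \in \ker\al^{\sharp}(x_0)$ gives $\n_i\al(v, w) = 0$.

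\emph{Part (2).} Assume $\la_m(x_0) > 0$. Combined with $\la_1 \equiv \cdots \equiv \la_{m-1} \equiv 0$, this forces $\ker\al^{\sharp}(x_0) = \linspan\{e_1, \dots, e_{m-1}\}$ and $E_{j(m)}(x_0) = \linspan\{e_m, \dots, e_{D_m}\}$, with $\la_m(x_0) < \la_{D_m+1}(x_0)$ by definition of $D_m$. Take any unit vector $v \in E_{j(m)}(x_0)$ and complete $\{e_1, \dots, e_{m-1}, v\}$ to an orthonormal basis of eigenvectors satisfying the ordering hypothesis of \Cref{BCD-extension lemma}. Applying the first part of that lemma with this basis yields
\[
\xi_{;i}(x_0) = \sum_{k=1}^{m-1}\al_{kk;i}(x_0) + \n_i\al(v, v)(x_0) = \n_i\al(v, v)(x_0),
\]
where the first $m-1$ summands vanish by part (1) above. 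Hence $\n_i\al(v, v) = \xi_{;i}(x_0)$ for \emph{every} unit $v \in E_{j(m)}(x_0)$; bilinearity then gives $\n_i\al(v, v) = g(v, v)\,\xi_{;i}(x_0)$ for all $v \in E_{j(m)}(x_0)$, and polarising over this subspace produces $\n_i\al(v, w) = g(v, w)\,\xi_{;i}(x_0)$, which is precisely the stated restriction identity.

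The only delicate step is the freedom to let $e_m$ range over \emph{every} unit vector of $E_{j(m)}(x_0)$ when applying \Cref{BCD-extension lemma}. This is already built into the statement of the lemma (``for any choice of $m$ orthonormal eigenvectors''), and it ultimately reflects Ky Fan's variational characterisation of $G_m$ as the infimum of $\tr(\pi_P \circ \al^{\sharp}|_P)$ over $m$-planes $P$: every $m$-plane spanned by orthonormal eigenvectors for the bottom $m$ eigenvalues realises the infimum, so the first-order optimality of the lower support $\xi$ against the associated upper support $\Theta$ holds along each of them.
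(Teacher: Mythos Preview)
Your proof is correct. Part (2) follows the paper's proof essentially verbatim: apply \Cref{BCD-extension lemma}(1) with $V_m=\linspan\{e_1,\dots,e_{m-1},v\}$ for each unit $v\in E_{j(m)}(x_0)$, then polarise. You make explicit the use of part (1) to kill the $\sum_{k=1}^{m-1}\al_{kk;i}$ contribution, which the paper leaves implicit in writing $\n_i\al(v,v)=\tr_{V_m}\n_i\al$.

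Part (1) is where you differ. The paper observes that $\xi\equiv 0$ is a lower support for $\la_1$ and invokes \Cref{Brendle's lemma}(1) to get $\al_{ij;k}=\de_{ij}\cdot 0=0$ for $i,j\leq d_1$. You instead give a direct, self-contained argument: extend $v\in\ker\al^{\sharp}(x_0)$ to a vector field $V$, note that $\al(V,V)\geq 0$ attains a minimum at $x_0$, and differentiate. Both are short; yours avoids appealing to the viscosity lemma entirely and makes transparent that (1) depends only on the non-negativity of $\al$ (no support function $\xi$ is needed), while the paper's route emphasises that (1) is just the $m=1$ special case of the same mechanism driving (2).
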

	
	\begin{proof}
                We use a basis $(e_{i})$ as in \Cref{BCD-extension lemma}. To prove (1) we may assume  $\lambda_1(x_0) = 0$, and hence the zero function is a lower support for $\lambda_1$. By \Cref{Brendle's lemma}, we have $\nabla \alpha_{kl}=0$ for all $1\leq k,l\leq d_1$ proving the first equation.

Now we prove (2). For $m=1$ the claim follows from \Cref{BCD-extension lemma}-(1). Suppose $m > 1$. If $d_1 \geq m$ at $x_0$ then $\lambda_m(x_0) = 0$ which violates our assumption. Hence $d_1=m-1$ and $E_1(x_0) = \linspan \{e_1, \dots, e_{m-1}\}$. Taking any unit vector $v \in E_2(x_0)=\linspan\{e_m,\ldots,e_{D_m}\}$ and applying \Cref{BCD-extension lemma}-(1) with $V_m=\{e_1,\ldots,e_{m-1},v\}$ gives
		\eq{\nabla_i \alpha(v,v) = \tr_{V_m} \nabla_i\al= \nabla_i \xi\q \forall 1\leq i\leq n.}
		Polarizing the quadratic form $v \mapsto \nabla_i \alpha(v, v)$ over $E_2(x_0)$ then shows \[\nabla_i \alpha_{kl} =\delta_{kl} \nabla_i\xi\quad \fa m \leq k,l \leq \Dm .\]
%
	\end{proof}
	
	Now we state the key outcome of the results in this section. We want to acknowledge that the following proof is inspired by the beautiful paper \cite{Szekelyhidi2016} and their sophisticated test function
	\eq{Q = \sum_{q=1}^{m}G_{q}.}

	\begin{thm}\label{viscosity}
		Under the assumptions of \Cref{CRT}, if  $\dim \ker \alpha^{\sharp} \geq m-1$, for all $\Om\Subset M$ there exists a constant $c=c(\Om)$, such that for all $x_{0}\in \Om$ and any lower support function $\xi$  for $G_m(\alpha^{\sharp})$ at $x_0$ we have
		\eq{F^{ij} \xi_{;ij} \leq c(\xi+ \abs{\nabla \xi}).}
	\end{thm}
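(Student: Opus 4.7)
The plan is to combine \Cref{BCD-extension lemma} with two differentiations of the equation $F(\al^{\sharp},\cdot)=0$ and the $\Phi$-inverse concavity of $F$, generalizing the Christoffel--Minkowski calculation at the beginning of the paper. Choose an orthonormal frame $(e_{i})$ at $x_{0}$ simultaneously diagonalizing $\al^{\sharp}$ and $F^{ij}$; write $\al_{ij}=\de_{ij}\la_{i}$ and $F^{ij}=\de^{ij}F^{ii}$. By hypothesis $\la_{1}=\cdots=\la_{m-1}=0$. \Cref{BCD-extension lemma} supplies $\xi_{;i}(x_{0})=\sum_{k=1}^{m}\al_{kk;i}$ and
\[F^{ij}\xi_{;ij}\;\leq\;\sum_{k=1}^{m}F^{ii}\al_{kk;ii}\;-\;2\sum_{k=1}^{m}\sum_{r>\Dm}\sum_{i}\fr{F^{ii}(\al_{kr;i})^{2}}{\la_{r}-\la_{k}}.\]

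In the first sum, commute covariant derivatives via Codazzi and the Ricci identity, $\al_{kk;ij}=\al_{ij;kk}+(\Rm\!\cdot\!\al)_{ijk}$. Contracted with $F^{ij}$ and summed over $k\in\{1,\dots,m\}$, the curvature commutator reproduces $-\sum_{k}\tr_{g}\Rm(\al^{\sharp},e_{k},D_{\al^{\sharp}}F,e_{k})$, which is precisely the curvature piece in the definition of $\om_{F}$. Differentiating $F(\al^{\sharp},\cdot)=0$ twice in the direction $e_{k}$ gives
\[F^{ij}\al_{ij;kk}=-F^{ij,pq}\al_{ij;k}\al_{pq;k}-2D_{x^{k}}F^{ij}\al_{ij;k}-D^{2}_{xx}F(e_{k},e_{k}),\]
and collecting through the definition of $\om_{F}$,
\[\sum_{k=1}^{m}F^{ii}\al_{kk;ii}\;=\;\sum_{k=1}^{m}\Bigl[-F^{ij,pq}\al_{ij;k}\al_{pq;k}+\Phi^{ij,pq}\al_{ij;k}\al_{pq;k}-\om_{F}(\al)(\n_{e_{k}}\al,e_{k})\Bigr].\]

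To handle the $F^{ij,pq}$--$\Phi^{ij,pq}$ combination together with the viscosity gradient contribution, apply $\Phi$-inverse concavity restricted to the positive-eigenvalue subspace of $\al^{\sharp}$ (where $\tilde\al$ is well defined), exactly as in the $m=1$ Christoffel--Minkowski calculation at the start of the paper. The restricted bound contributes $+2\sum_{i,j:\la_{j}>0}F^{ii}(\al_{ij;k})^{2}/\la_{j}$, which I match against the Codazzi-rewritten viscosity gradient $-2\sum_{r>\Dm}\sum_{i}F^{ii}(\al_{ir;k})^{2}/(\la_{r}-\la_{k})$. For $k<m$ one has $\la_{k}=0$, so the denominators $\la_{j}$ and $\la_{r}-\la_{k}$ agree and the two sums cancel on the shared range $\{r>\Dm\}$. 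The surpluses come from: (a) the extra range $\{m\leq j\leq \Dm\}$ (only when $\la_{m}>0$), handled via \Cref{nabla-alpha}(2) which forces $\al_{ij;k}=\de_{ij}\xi_{;k}$ there; (b) the case $k=m$ with $\la_{m}>0$, where the mismatch $1/\la_{j}-1/(\la_{j}-\la_{m})$ has the favorable sign; (c) contractions in $F^{ij,pq}$ with indices in $\ker\al^{\sharp}$, handled via \Cref{nabla-alpha}(1) (which forces $\n\al|_{\ker\x\ker}=0$) together with a Cauchy--Schwarz bound. Each surplus is absorbed into a single error $\leq c(\xi+|\n\xi|)$, yielding
\[F^{ij}\xi_{;ij}\;\leq\;-\sum_{k=1}^{m}\om_{F}(\al)(\n_{e_{k}}\al,e_{k})\;+\;c(\xi+|\n\xi|).\]

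Finally, the hypothesis of \Cref{CRT} gives $\om_{F}(\al)(\n_{e_{k}}\al,e_{k})\geq -c(\al(e_{k},e_{k})+|\n\al(e_{k},e_{k})|)$. For $1\leq k\leq m-1$, both $\al(e_{k},e_{k})=0$ and $\n\al(e_{k},e_{k})=0$ by \Cref{nabla-alpha}(1). For $k=m$, $\al(e_{m},e_{m})=\la_{m}\leq \xi$, and $|\n_{i}\al(e_{m},e_{m})|\leq|\n_{i}\xi|$ by \Cref{nabla-alpha}(2) if $\la_{m}>0$ (trivially if $\la_{m}=0$, since then $e_{m}\in\ker\al^{\sharp}$). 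Each summand is thus bounded by $c(\xi+|\n\xi|)$, proving the theorem. The main obstacle I foresee lies in the third paragraph: $\Phi$-inverse concavity as defined requires $\al>0$, so its careful restriction to the positive-eigenvalue subspace, and the precise matching of index ranges between inverse concavity and the viscosity gradient (with surplus terms controlled by \Cref{nabla-alpha} and Cauchy--Schwarz), form the technical heart of the proof.
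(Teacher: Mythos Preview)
Your outline captures the right structure, but the third paragraph has a genuine gap in handling the ``mixed'' derivatives $\al_{jk;i}$ with one tensor index in $\ker\al^{\sharp}$ and the other in $E_{j(m)}=\linspan\{e_{m},\dots,e_{\Dm}\}$ (that is, $k<m\leq j\leq\Dm$, arising when $\la_{m}(x_{0})>0$). \Cref{nabla-alpha} does not touch these: part~(1) kills only $\n\al|_{\ker\times\ker}$ and part~(2) identifies only $\n\al|_{E_{j(m)}\times E_{j(m)}}$ with $g\,\n\xi$; neither says anything about mixed pairs. Yet such terms necessarily appear both in your surplus~(a) (for $i>\Dm$ you cannot invoke \Cref{nabla-alpha}(2), and even for $m\leq i\leq\Dm$ the result is $(\xi_{;k})^{2}/\la_{m}\sim|\n\xi|^{2}/\xi$, which is not bounded by $c(\xi+|\n\xi|)$ with $c$ uniform on $\Om$) and in~(c), where the complement of $\{i,j,p,q>\Dm\}$ in the $F^{ij,pq}$ sum contributes linear errors $C\sum_{k<m}\sum_{m\leq j\leq\Dm}\sum_{i}|\al_{jk;i}|$. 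A bare Cauchy--Schwarz cannot absorb these into $c(\xi+|\n\xi|)$ without a matching negative quadratic, and a single application of \Cref{BCD-extension lemma} at level $m$ supplies quadratics only in the range $r>\Dm$. Separately, the inequality you call ``$\Phi$-inverse concavity restricted to the positive-eigenvalue subspace'' is not a consequence of \Cref{Phi def}, which requires $\be>0$: taking $\be=\al+\ep\,\id$ with the full $\eta=\n_{e_{k}}\al$ and letting $\ep\to 0$ only yields the trivial upper bound $+\8$ whenever some $\al_{ij;k}$ with $\la_{j}=0$ is nonzero---and for $k=m$, $\la_{m}>0$, $j<m$, this is exactly a mixed term you cannot rule out.

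The paper's remedy is precisely the idea you are missing. Since $\la_{1}\equiv\cdots\equiv\la_{m-1}\equiv 0$, the zero function is a lower support for every $G_{q}$ with $q<m$, so \Cref{BCD-extension lemma} applied at each such $q$ gives the free inequalities $0\leq\sum_{k\leq q}\al_{kk;ii}-2\sum_{k\leq q}\sum_{j>D_{q}}(\al_{kj;i})^{2}/(\la_{j}-\la_{k})$. Summing these together with the $q=m$ inequality (this is the Sz\'ekelyhidi--Weinkove test function $\sum_{q}G_{q}$) produces the extra negative quadratic $-2\sum_{k<m}(m-k)\sum_{m\leq j\leq\Dm}\sum_{i}F^{ii}(\al_{jk;i})^{2}/\la_{j}$ in exactly the mixed range. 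Since $\la_{j}=\la_{m}=\xi$ there and $F$ is uniformly elliptic, Young's inequality gives $C|\al_{jk;i}|\leq 2(m-k)F^{ii}(\al_{jk;i})^{2}/\xi+c'\xi$, absorbing all mixed linear errors. The paper also avoids the ill-posed ``restricted'' inverse concavity by first truncating $\eta$ to indices $>\Dm$ and then applying $\Phi$-inverse concavity to $\al_{\ep}=\al+\ep\,\id$ before letting $\ep\to 0$; with the truncated $\eta$ no blow-up occurs and your surplus~(a) never appears.
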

	
	\begin{proof}
	In view of our assumption $\la_{m-1}\equiv 0$. Hence the zero function is a smooth lower support at $x_{0}$ for every subtrace $G_{q}$ with $1\leq q\leq m-1$. Therefore by \Cref{BCD-extension lemma}, for every $1\leq q\leq m-1$ and every $1\leq i\leq n$ we obtain
\eq{\label{extra info}0\leq \sum_{k=1}^{q}\al_{kk;ii}-2\sum_{k=1}^{q}\sum_{j>D_{q}}\fr{(\al_{kj;i})^{2}}{\la_{j}-\la_{k}}. }

		Due to the Ricci identity, we have the commutation formula
		\eq{\label{eq:commute}\al_{ij;kl} = \al_{ki;jl} &= \al_{ki;lj} + R^{p}_{kjl}\al_{pi} + R^{p}_{ijl}\al_{pk} \\
			&= \al_{kl;ij} + R^{p}_{kjl}\al_{pi} + R^{p}_{ijl}\al_{pk}.}
		Taking into account  \Cref{BCD-extension lemma} and adding the  inequalities \eqref{extra info} for $1\leq q\leq m-1$, we have at $x_{0}$,
		\eq{F^{ij}\xi_{;ij} &\leq \sum_{q=1}^{m}\sum_{k=1}^{q}F^{ij}\al_{kk;ij}   -2\sum_{q=1}^{m}\sum_{k=1}^{q}\sum_{j>D_{q}}\fr{F^{ii}(\al_{kj;i})^{2}}{\la_{j}-\la_{k}} \\
			&\leq  \sum_{q=1}^{m}\sum_{k=1}^{q}F^{ij}\left(\al_{ij;kk} - R^{p}_{kjk}\al_{pi} - R^{p}_{ijk}\al_{pk}\right)\\			&\hp{=~}-2\sum_{q=1}^{m}\sum_{k=1}^{q}\sum_{j>D_{q}}\fr{F^{ii}(\al_{kj;i})^{2}}{\la_{j}-\la_{k}}.}
		
		Now differentiating the equation
		$F(\al^{\sharp}, x)=0$ yields
		\eq{0&= F^{ij}\al_{ij;k}+D_{x^{k}}F,\\
		0&=F^{ij,rs}\al_{ij;k}\al_{rs;l}+D_{x^{l}}F^{ij}\al_{ij;k}+F^{ij}\al_{ij;kl}+D_{x^{k}}F^{rs}\al_{rs;l}+D_{x^{k}x^{l}}^{2}F.}
		Then substituting above gives
		\eq{F^{ij}\xi_{;ij} &\leq -2\sum_{q=1}^{m}\sum_{k=1}^{q}\sum_{j>D_{q}}\fr{F^{ii}(\al_{kj;i})^{2}}{\la_{j}-\la_{k}} - \sum_{q=1}^{m}\sum_{k=1}^q F^{ij,rs}\al_{ij;k}\al_{rs;k} \\
			&\hp{=~} - \sum_{q=1}^{m}\sum_{k=1}^{q}\br{D^{2}_{x^{k}x^{k}}F + 2D_{x^{k}}F^{ij}\al_{ij;k} + F^{ij}\left(R^{p}_{kjk}\al_{pi} + R^{p}_{ijk}\al_{pk}\right)}\\
			&\leq -2\sum_{q=1}^{m}\sum_{k=1}^{q}\sum_{j>D_{m}}\fr{F^{ii}(\al_{ij;k})^{2}}{\la_{j}} - \sum_{q=1}^{m}\sum_{k=1}^q \sum_{i,j,r,s>D_{m}}F^{ij,rs}\al_{ij;k}\al_{rs;k}\\
			&\hp{=~} - \sum_{q=1}^{m}\sum_{k=1}^{q}\br{D^{2}_{x^{k}x^{k}}F + 2D_{x^{k}}F^{ij}\al_{ij;k} + F^{ij}R^{p}_{kjk}\al_{pi}}+c\xi \\
                        &\hp{=~} + C\sum_{i=1}^{n}\sum_{j,k\leq D_{m}}\abs{\al_{jk;i}}-2\sum_{q=1}^{m}\sum_{k=1}^{q}\sum_{j=D_{q}+1}^{D_{m}}\fr{F^{ii}(\al_{ij;k})^{2}}{\la_{j}},}
where we have used that $\al$ is Codazzi and the fact that $1 \leq k \leq m \leq \Dm$ in splitting the sum involving $F^{ij,rs}$ into terms where at least two indices are at most $D_{m}$ and the remaining indices $i,j,r,s>\Dm$. We have also used $\la_j - \la_k \geq \la_j$, and that for some constant $c,$
		\eq{F^{ij}R^{p}_{ijm}\al_{pm}\geq -c\xi.} 

Now for every $1\leq k\leq m$ define
		\eq{\eta_{k}=(\eta_{ijk}) = \begin{cases}\al_{ij;k},& i,j>D_{m}\\
				0, & i\leq D_{m}~\mbox{or}~j\leq D_{m}.\end{cases}}
				Then 
		\eq{F^{ij}\xi_{;ij} &\leq  -2\sum_{q=1}^{m}\sum_{k=1}^{q}\sum_{j>D_{m}}\fr{F^{ii}(\eta_{ijk})^{2}}{\la_{j}} - \sum_{q=1}^{m}\sum_{k=1}^q F^{ij,rs}\eta_{ijk}\eta_{rsk}\\
			&\hp{=~} - \sum_{q=1}^{m}\sum_{k=1}^{q}D^{2}_{x^{k}x^{k}}F - 2\sum_{q=1}^{m}\sum_{k=1}^{q}D_{x^{k}}F^{ij}\eta_{ijk} - \sum_{q=1}^{m}\sum_{k=1}^{q}F^{ij}R^{p}_{kjk}\al_{pi}\\
			&\hp{=~}+C\sum_{i=1}^{n}\sum_{j,k\leq D_m}\abs{\al_{jk;i}}-2\sum_{q=1}^m\sum_{k=1}^{q}\sum_{j=D_{q}+1}^{D_{m}}\fr{F^{ii}(\al_{ij;k})^{2}}{\la_{j}}+ c\xi.}
		
		In addition we define $\al^{\sharp}_{\ep} = \al^{\sharp}+\ep\id$,
		which has positive eigenvalues for $\varepsilon>0$. In the sequel, a subscript $\ep$ denotes evaluation of a quantity at $\al^{\sharp}_{\ep}$, e.g.,
		we put $F^{ij}_{\ep} = F^{ij}(\al^{\sharp}_{\ep}).$
		We have
		\eq{F^{ij}\xi_{;ij}
		 &\leq \sum_{q=1}^{m}\lim_{\ep\ra 0}\left( - 2\sum_{k=1}^{q}\sum_{j=1}^{n}\fr{F^{ii}_{\ep}(\eta_{ijk})^{2}}{\la_{j}+\ep} - \sum_{k=1}^q F^{ij,rs}_{\ep}\eta_{ijk}\eta_{rsk}\right.\\
			&\hp{=~}\left. - \sum_{k=1}^{q}(D^{2}_{x^{k}x^{k}}F)_{\ep} - 2\sum_{k=1}^{q}(D_{x^{k}}F^{ij})_{\ep}\eta_{ijk} - \sum_{k=1}^{q}F^{ij}_{\ep}R^{p}_{kjk}(\al_{\ep})_{pi}\right )\\
                        &\hp{=~}+C\sum_{i=1}^{n}\sum_{j,k\leq D_m}\abs{\al_{jk;i}}-2\sum_{q=1}^m\sum_{k=1}^{q}\sum_{j=D_{q}+1}^{D_{m}}\fr{F^{ii}(\al_{ij;k})^{2}}{\la_{j}}+c\xi.}
In view of \Cref{Phi def}, and the definition of $\omega_F,$				
		\eq{F^{ij}\xi_{;ij}
			&\leq \sum_{q=1}^{m}\lim_{\ep\ra 0}\left(  - \sum_{k=1}^{q}\Phi^{ij,rs}_{\ep}\eta_{ijk}\eta_{rsk}- \sum_{k=1}^{q}(D^{2}_{x^{k}x^{k}}F)_{\ep}\right.\\
			&\hp{=~}\left.  - 2\sum_{k=1}^{q}(D_{x^{k}}F^{ij})_{\ep}\eta_{ijk} - \sum_{k=1}^{q}F^{ij}_{\ep}R^{p}_{kjk}(\al_{\ep})_{pi}\right )\\
                        &\hp{=~}+C\sum_{i=1}^{n}\sum_{j,k\leq D_m}\abs{\al_{jk;i}}-2\sum_{q=1}^m\sum_{k=1}^{q}\sum_{j=D_{q}+1}^{D_{m}}\fr{F^{ii}(\al_{ij;k})^{2}}{\la_{j}}+c\xi\\
			&\leq - \sum_{q=1}^{m}\sum_{k=1}^{q}\om_{F}(\al)(\eta_{k},e_{k})+C\sum_{i=1}^{n}\sum_{j,k\leq D_m}\abs{\al_{jk;i}}\\
			&\hp{=~}-2\sum_{q=1}^m\sum_{k=1}^{q}\sum_{j=D_{q}+1}^{D_{m}}\fr{F^{ii}(\al_{ij;k})^{2}}{\la_{j}}+c\xi.}
                    Adding and subtracting some terms gives
                      \eq{\label{eq:ineq}F^{ij}\xi_{;ij}
                        &\leq -\sum_{q=1}^{m}\sum_{k=1}^{q}\om_{F}(\al)(\n_{e_{k}}\al,e_{k}) + c\xi\\
                        &\hp{=~}+\sum_{q=1}^{m}\sum_{k=1}^{q}\om_{F}(\al)(\n_{e_{k}}\al,e_{k}) - \sum_{q=1}^{m}\sum_{k=1}^{q}\om_{F}(\al)(\eta_{k},e_{k}) \\
                &\hp{=~}+ C\sum_{i=1}^{n}\sum_{k,j\leq D_m}\abs{\al_{jk;i}}-2\sum_{q=1}^m\sum_{k=1}^{q}\sum_{j=D_{q}+1}^{D_{m}}\fr{F^{ii}(\al_{ij;k})^{2}}{\la_{j}}.}
Next we estimate the last two lines of \eqref{eq:ineq}. We have 
\eq{\sum_{q=1}^{m}\sum_{k=1}^{q}\om_{F}(\al)(\n_{e_{k}}\al,e_{k}) &- \sum_{q=1}^{m}\sum_{k=1}^{q}\om_{F}(\al)(\eta_{k},e_{k})\leq C\sum_{i=1}^{n}\sum_{j,k\leq D_m}\abs{\al_{jk;i}}, \\ C\sum_{i=1}^{n}\sum_{j,k\leq D_m}\abs{\al_{jk;i}}&\leq C\sum_{i=1}^{n}\sum_{k=1}^{D_1}\sum_{j=D_{1}+1}^{D_{m}}\abs{\al_{jk;i}}+c|\nabla \xi|,}
where for the last inequality we used \Cref{nabla-alpha}.
Let us define
\eq{\mathcal{R}&=C\sum_{i=1}^{n}\sum_{k=1}^{D_1}\sum_{j=D_{1}+1}^{D_{m}}\abs{\al_{jk;i}}-2\sum_{q=1}^m\sum_{k=1}^{q}\sum_{j=D_{q}+1}^{D_{m}}\fr{F^{ii}(\al_{ij;k})^{2}}{\la_{j}-\la_{k}}\\
&=C\sum_{i=1}^{n}\sum_{k=1}^{D_1}\sum_{j=D_{1}+1}^{D_{m}}\abs{\al_{jk;i}}-2\sum_{q=1}^{m-1}\sum_{k=1}^{q}\sum_{j=D_{q}+1}^{D_{m}}\fr{F^{ii}(\al_{ij;k})^{2}}{\la_{j}-\la_{k}}.}
Note that if  $\lambda_m(x_0)=0$, then $D_q=D_m$ for all $q\leq m$ and hence $\mathcal{R}=0$.
If $\la_{m}(x_{0})>0,$ then we have $D_q=m-1$ for all $q\leq m-1$ and
\eq{\mathcal{R}=C\sum_{i=1}^{n}\sum_{k=1}^{m-1}\sum_{j=m}^{D_{m}}\abs{\al_{jk;i}}-2\sum_{k=1}^{m-1}(m-k)\sum_{j=m}^{D_{m}}\fr{F^{ii}(\al_{ij;k})^{2}}{\la_{j}-\la_{k}}.}
Therefore, due to uniform ellipticity, we can use
\eq{C\sum_{i=1}^n\abs{\al_{jk;i}}\leq 2(m-k)\fr{F^{ii}(\al_{jk;i})^{2}}{\la_{j}-\lambda_k}+c\xi}
to show that $\mathcal{R}\leq c'\xi.$
Then by the assumptions on $\omega_F$, the right hand side of \eqref{eq:ineq} is bounded by $c(\xi + |\nabla \xi|)$ completing the proof.
                      \end{proof}

                      \begin{rem}
                      Here we crucially used that $F$ is $\Phi$-inverse concave, then we took the limit $\ep \to 0$ and finally swapped $\eta_k$ with $\n_{e_k} \al$ absorbing the extra terms. If on the other hand we tried to swap first without using $\Phi$-inverse concavity, the extra terms would involve $\sum_{r=1}^{n}\fr{F^{ii}_{\ep}(\n_{e_k} (\al_{\ep})_{ir})^{2}}{\la_{r}+\ep}$. Since $\la_r = 0$ for $1\leq r \leq m-1$ this blows up in the limit $\ep \to 0$ and cannot be absorbed.
                      \end{rem}

	

	\begin{proof}[Proof of \Cref{CRT}]
		Let 
		$k := \max_{x\in M}\dim\ker \al^{\sharp}(x).$
		If $k=0$, we are done. 
		By induction we show that for all $1\leq m\leq k$ we have $\la_{m}\equiv 0$.
		For $m=1$, clearly we have $\dim \ker \alpha^{\sharp} \geq m-1$ and hence by \Cref{viscosity} a lower support $\xi$ for $G_1 = \lambda_1$ locally satisfies
		\eq{F^{ij} \xi_{;ij} \leq c(\xi + \abs{\nabla \xi}).}
		By the strong maximum principle \cite{Bardi1999}, $\lambda_1 \equiv 0$. 
		
		Now suppose the claim holds true for $m-1$, i.e.,
		\eq{\la_1 \equiv \dots \equiv \la_{m-1} \equiv 0. }
		Then a lower support $\xi$ for $G_m$ satisfies
		\eq{F^{ij}\xi_{;ij}  \leq c(\xi + \abs{\nabla \xi}).}
		Hence $G_m \equiv 0$ for all $m\leq k.$ Since $k$ indicates the maximum dimension of the kernel, we must have $\la_{k+1}>0$ and  the rank is always $n-k$.
	\end{proof}

	\section*{Acknowledgment}
	PB was supported by the ARC within the research grant ``Analysis of fully non-linear geometric problems and differential equations", number DE180100110. MI was supported by a Jerrold E. Marsden postdoctoral fellowship from the Fields Institute. JS was supported by the ``Deutsche Forschungsgemeinschaft" (DFG, German research foundation) within the research scholarship ``Quermassintegral preserving local curvature flows", grant number SCHE 1879/3-1.
	
	\bibliographystyle{amsalpha-nobysame}


\begin{thebibliography}{ACFM15}
		
		\bibitem[ACFM15]{AndrewsChenFangMcCoy:/2015}
		Ben Andrews, Xuzhong Chen, Hanlong Fang, and James McCoy, \emph{{Expansion of
				Co-compact convex spacelike hypersurfaces in Minkowski space by their
				curvature}}, Indiana University Mathematics Journal \textbf{64} (2015),
		no.~2, 635--662.
		
		\bibitem[And07]{Andrews:/2007}
		Ben Andrews, \emph{Pinching estimates and motion of hypersurfaces by curvature
			functions}, Journal f\"{u}r die Reine und Angewandte Mathematik \textbf{608} (2007), 17--33.
		
		
		\bibitem[BCD17]{BrendleChoiDaskalopoulos:/2017}
		Simon Brendle, Kyeongsu Choi, and Panagiota Daskalopoulos, \emph{Asymptotic
			behavior of flows by powers of the {G}aussian curvature}, Acta Mathematica
		\textbf{219} (2017), no.~1, 1--16.
		
		\bibitem[BD99]{Bardi1999}
		Martino Bardi and Francesca {Da Lio}, \emph{{On the strong maximum principle
				for fully nonlinear degenerate elliptic equations}}, Archiv der Mathematik
		\textbf{73} (1999), no.~4, 276--285.
		
		\bibitem[BG09]{Bian2009}
		Baojun Bian and Pengfei Guan, \emph{{A microscopic convexity principle for
				nonlinear partial differential equations}}, Inventiones Mathematicae
		\textbf{177} (2009), no.~2, 307--335.
		
		\bibitem[Bha07]{MR2325304}
		Rajendra Bhatia, \emph{Perturbation bounds for matrix eigenvalues}, Classics in
		Applied Mathematics, vol.~53, Society for Industrial and Applied Mathematics
		(SIAM), Philadelphia, PA, 2007, Reprint of the 1987 original.
		
		\bibitem[BIS19]{Bryan2019}
		Paul Bryan, Mohammad~N. Ivaki, and Julian Scheuer, \emph{{Harnack inequalities
				for curvature flows in Riemannian and Lorentzian manifolds}}, Journal f\"{u}r die
		Reine und Angewandte Mathematik \textbf{2020} (2019), no.~764, 71--109.
		
		\bibitem[BIS21]{BIS6}
		Paul Bryan, Mohammad~N. Ivaki, and Julian Scheuer, \emph{Parabolic approaches
			to curvature equations}, Nonlinear Analysis \textbf{203} (2021), art. 112174.
		
		\bibitem[CF85]{MR792181}
		Luis~A. Caffarelli and Avner Friedman, \emph{Convexity of solutions of
			semilinear elliptic equations}, Duke Mathematical Journal \textbf{52} (1985), no.~2,
		431--456.
		
		\bibitem[CGM07]{Caffarelli2007}
		Luis Caffarelli, Pengfei Guan, and Xi-Nan Ma, \emph{{A constant rank theorem
				for solutions of fully nonlinear elliptic equations}}, Communications on Pure
		and Applied Mathematics \textbf{60} (2007), no.~12, 1769--1791.
		
		\bibitem[Ger06]{Gerhardt:/2006}
		Claus Gerhardt, \emph{Curvature problems}, Series in Geometry and Topology,
		vol.~39, International Press of Boston Inc., Sommerville, 2006.
		
		\bibitem[GLM06]{GuanLinMa:/2006}
		Pengfei Guan, Changshou Lin, and Xi-Nan Ma, \emph{The {C}hristoffel-{M}inkowski
			problem {II}: {W}eingarten curvature equations}, Chinese Annals of Mathematics, Series B
		\textbf{27} (2006), no.~6, 595--614.
		
		\bibitem[GLM09]{Guan2009c}
		Pengfei Guan, Changshou Lin, and Xi-Nan Ma, \emph{{The existence of convex body
				with prescribed curvature measures}}, International Mathematics Research
		Notices \textbf{2009} (2009), no.~11, 1947--1975.
		
		\bibitem[GM03]{Guan2003a}
		Pengfei Guan and Xi-Nan Ma, \emph{{The Christoffel-Minkowski problem I:
				Convexity of solutions of a Hessian equation}}, Inventiones Mathematicae
		\textbf{151} (2003), no.~3, 553--577.
		
		\bibitem[GMZ06]{Guan2006a}
		Pengfei Guan, Xi-Nan Ma, and Feng Zhou, \emph{{The Christofel-Minkowski problem
				III: Existence and convexity of admissible solutions}}, Communications on
		Pure and Applied Mathematics \textbf{59} (2006), no.~9, 1352--1376.
		
		\bibitem[GZ19]{Guan2019a}
		Pengfei Guan and Xiangwen Zhang, \emph{{A class of curvature type equations}}, Preprint (2019),    arXiv:1909.03645.
		
		
		\bibitem[HMS04]{Hu2004}
		Changqing Hu, Xi-Nan Ma, and Chunli Shen, \emph{{On the Christoffel-Minkowski
				problem of Firey's p-sum}}, Calculus of Variations and Partial Differential
		Equations \textbf{21} (2004), no.~2, 137--155.
		
		\bibitem[HS99]{HuiskenSinestrari:09/1999}
		Gerhard Huisken and Carlo Sinestrari, \emph{Convexity estimates for mean
			curvature flow and singularities of mean convex surfaces}, Acta Mathematica
		\textbf{183} (1999), no.~1, 45--70.
		
		\bibitem[Iva19]{Ivaki2019}
		Mohammad~N. Ivaki, \emph{{Deforming a hypersurface by principal radii of
				curvature and support function}}, Calculus of Variations and Partial
		Differential Equations \textbf{58} (2019), no.~1.
		
		\bibitem[KL87]{MR856307}
		Nicholas~J. Korevaar and John~L. Lewis, \emph{Convex solutions of certain
			elliptic equations have constant rank {H}essians}, Archive for Rational Mechanics and Analysis
		\textbf{97} (1987), no.~1, 19--32.
		
		\bibitem[Lan14]{Langford14} Mat Langford, \emph{Motion of hypersurfaces by curvature}, Ph.D. thesis, Australian National University (2014).
		
		\bibitem[Lan17]{Langford17} Mat Langford, \emph{A general pinching principle for mean curvature flow and applications}, Calculus of Variations and Partial Differential Equations \textbf{56} (2017), no.~4, art.~107.
		
		\bibitem[Sch18]{Scheuer:06/2018}
		Julian Scheuer, \emph{Isotropic functions revisited}, Archiv der Mathematik \textbf{110}
		(2018), no.~6, 591--604.
		
		\bibitem[SW16]{Szekelyhidi2016}
		G{\'{a}}bor Sz{\'{e}}kelyhidi and Ben Weinkove, \emph{{On a constant rank
				theorem for nonlinear elliptic PDEs}}, Discrete and Continuous Dynamical
		Systems- Series A \textbf{36} (2016), no.~11, 6523--6532.
		
		\bibitem[SW20]{Szekelyhidi2020}
		G{\'{a}}bor Sz{\'{e}}kelyhidi and Ben Weinkove, \emph{{Weak Harnack
				inequalities for eigenvalues and constant rank theorems}}, Preprint (2020), arXiv:2010.10692.
		
	\end{thebibliography}
	\vspace{10mm}
	
	\textsc{Department of Mathematics, Macquarie University,\\ NSW 2109, Australia, }\email{\href{mailto:paul.bryan@mq.edu.au}{paul.bryan@mq.edu.au}}
	
	\vspace{2mm}
	
	\textsc{Institut f\"{u}r Diskrete Mathematik und Geometrie,\\ Technische Universit\"{a}t Wien,
		Wiedner Hauptstr 8-10,\\ 1040 Wien, Austria, }\email{\href{mailto:mohammad.ivaki@tuwien.ac.at}{mohammad.ivaki@tuwien.ac.at}}
	
	\vspace{2mm}
	
	\textsc{School of Mathematics, Cardiff University, Senghennydd Road, Cardiff CF24 4AG, Wales, }\email{\href{mailto:scheuerj@cardiff.ac.uk}{scheuerj@cardiff.ac.uk}}

\end{document}